\documentclass[12pt,a4paper]{amsart}

\usepackage{amssymb}
\usepackage{amsthm}
\usepackage{amsmath}
\usepackage{graphicx}
\usepackage{enumerate}
\usepackage{xcolor}
\usepackage{tikz}
\usepackage{graphbox}
\usepackage{orcidlink}
\usepackage{mathrsfs}  
\usepackage{hyperref}
\hypersetup{colorlinks=true,allcolors=blue}
\usepackage[margin=2.5cm]{geometry}

\newtheorem{theorem}{Theorem}[section]
\newtheorem{proposition}[theorem]{Proposition}

\newtheorem{lemma}[theorem]{Lemma}

\theoremstyle{definition}
\newtheorem{example}[theorem]{Example}
\newtheorem{definition}[theorem]{Definition}

\newcommand{\II}{\mathbb{I}}
\newcommand{\RR}{\mathbb{R}}
\newcommand{\dk}{\delta_k}
\newcommand{\DK}{\Delta_k}
\newcommand{\SK}{\mathcal{S}_k}
\newcommand{\PK}{\mathcal{P}_k}
\newcommand{\PKhat}{\widehat{\mathcal{P}}_k}
\newcommand{\Chat}{\widehat{C}}
\newcommand{\Xhat}{\widehat{X}}
\newcommand{\Yhat}{\widehat{Y}}
\newcommand{\PP}{\mathbb{P}}
\newcommand{\EE}{\mathbb{E}}
\newcommand{\VV}{\mathbb{V}}
\newcommand{\Cov}{\mathbb{C}\mathrm{ov}}
\newcommand{\BK}{\mathcal{B}_k}
\newcommand{\XX}{{\bf x}}

\newcommand{\aaa}{\boldsymbol{\alpha}}
\newcommand{\bbb}{\boldsymbol{\beta}}
\newcommand{\ccc}{\boldsymbol{\gamma}}
\newcommand{\be}{\boldsymbol{1}}
\newcommand{\hell}{\hat{\ell}}
\newcommand{\ba}{\boldsymbol{a}}
\newcommand{\Dir}{\textup{Dir}}
\newcommand{\vol}{\mathrm{V}}

\title{Random discrete copulas}

\author[D. Kokol Bukovšek]{Damjana Kokol Bukovšek \orcidlink{0000-0002-0098-6784} }
\address{University of Ljubljana, School of Economics and Business, and Institute of Mathematics, Physics and Mechanics, Ljubljana, Slovenia}
\email{Damjana.Kokol.Bukovsek@ef.uni-lj.si}

\author[B. Mojškerc]{Blaž Mojškerc \orcidlink{0000-0001-8096-355X} }
\address{University of Ljubljana, School of Economics and Business, and Institute of Mathematics, Physics and Mechanics, Ljubljana, Slovenia}
\email{Blaz.Mojskerc@ef.uni-lj.si}

\author[N. Stopar]{Nik Stopar \orcidlink{0000-0002-0004-4957} } 
\address{University of Ljubljana, Faculty of Civil and Geodetic Engineering, University of Ljubljana, Faculty of Mathematics and Physics, and Institute of Mathematics, Physics and Mechanics, Ljubljana, Slovenia}
\email{Nik.Stopar@fgg.uni-lj.si}

\keywords{Copula; discrete copula; random copula; random field; Birkhoff polytope.}
\subjclass[2020]{Primary 62H05, 60G60, Secondary 60B20}

\begin{document}

\begin{abstract}
We introduce the notion of a bivariate random discrete copula on an equidistant mesh and explore its stochastic properties.
A random discrete copula is a discrete random field, hence, its value at a given point on the mesh is a random variable.
We determine the distribution of this random variable and calculate its expected value and variance.
We also consider bilinear extension of a random discrete copula to a random field over the whole unit square. 
\end{abstract}

\maketitle

\section{Introduction}

Ever since Sklar's seminal paper~\cite{Skl59} on multivariate distributions in 1959, copulas have become one of the main statistical tools for modeling dependence. They serve as a link between the joint cumulative distribution functions of random variables and their univariate marginal distributions. 
In particular, they allow one to model continuous random vectors by separately modeling the marginal distributions and the dependence structure between the components.

In practice, a copula joining a pair of random variables may not be known.  
There are several approaches to addressing such situations. 
Traditional methods involve estimating the unknown copula either parametrically or non-parametrically, using available empirical data. 
In the parametric approach, a practitioner chooses an appropriate parametric family of copulas, based on past experience, and estimates the unknown parameters using the data, see \cite{FreVal98, CraCra08, KimSilSil07}. A non-parametric approach, on the other hand, does not require any prior knowledge about the copula.  Instead, the unknown copula is estimated directly using empirical copulas, see \cite{Deh79, CapFouGen97, GenNesRem14}. 

An alternative to the traditional methods is to use imprecise models, for example an imprecise copula, see \cite{MonMirPelVic15, OmlSto20}. 
This model replaces a single unknown copula with a family of feasible copulas. In particular, an imprecise copula is a set of all copulas lying between two quasi-copulas $A$ and $B$. 
On one hand, this allows one to use only functions $A$ and $B$ in calculations rather than the whole family of copulas, see for example \cite{PiEsGoPu26}. 
On the other hand, this means that the information on the intermediate copulas is essentially ignored.  If one could incorporate the information about the copulas between $A$ and $B$ into the model, say, by adding additional structure to the set of all such copulas, it would improve the model's performance, since it could be used to make better statistical estimations with higher confidence levels.

One way, which is largely unexplored in the literature, would be to introduce randomness when modeling the copula, in particular, by treating the unknown copula as a random element of some imprecise copula.
The notion of a random copula has not yet been developed, in fact, it is not entirely clear what a random copula should be. For this reason we believe that it is essential to first investigate random copulas in the discrete setting, in order to gain an insight into their behavior. The discrete case is easier to treat, since combinatorial techniques can be used. This will enable us to later tackle random copulas defined on the whole unit square, using the results of the discrete case as guidelines. Hence,
in this paper we initiate the investigation of random copulas in the discrete setting. In particular, we define  a bivariate random discrete copula on an equidistant mesh as a discrete random field, whose realizations are discrete copulas. 
We develop two specific models, the first one for the class of discrete copulas induced by permutations, and the second one for the class of all discrete copulas on an equidistant mesh.

We take advantage of the fact that 
bivariate discrete copulas defined on an equidistant mesh are in a bijective correspondence with bistochastic matrices, see \cite{KolMesMordSem06}. 
In particular, a bistochastic matrix represents the mass distribution of some bivariate discrete copula. Random bistochastic matrices have been studied in paper \cite{ChaDiaSly10}. 
The set of bistochastic matrices of order $k$ is a convex polytope $\mathscr{B}_k$ in a real vector space, called the Birkhoff polytope, see \cite{Zie95}. 
The authors of \cite{ChaDiaSly10} choose a random element from $\mathscr{B}_k$ uniformly with respect to the Lebesgue measure and investigate its stochastic properties. 
In paper \cite{CapSomBruZyc09}, the authors present an algorithm to generate random bistochastic matrices, based on an algorithm of Sinkhorn, \cite{Sin64}, and evaluate the probability measure induced on $\mathscr{B}_k$.

In this paper, we take a different approach by using the property that every bistochastic matrix is a convex combination of permutation matrices. 
The set of all possible $m$-tuples of coefficients is a simplex $\sigma^{m-1}$ in a high-dimensional real space, so we introduce a random discrete copula by choosing the $m$-tuple of coefficients randomly and uniformly with respect to the Lebesgue measure on $\sigma^{m-1}$. 
This approach is easier to handle since the volume of $\sigma^{m-1}$ is simple to evaluate, while the formulas for the volume of $\mathscr{B}_k$ are complicated and known in closed form only for small values of $k$, as demonstrated in \cite{BecPix03}.

As indicated by the above, developing any model for a random discrete copula requires making a specific choice of the underlying distribution on a chosen class of copulas. In our models we used the following principles for choosing the underlying distribution:
\begin{enumerate}[(P1)]
    \item \emph{Simplicity:} We want our choices to be as natural and standard  as possible and avoid the need for numerical simulations at the very beginning. We also want to be able to sample a random discrete copula directly. 
    \item \emph{Computability:} We want the model to have explicit closed form formulas for expected value and variance of the value of a random discrete copula at every point in the copula domain. This property was chosen since the obtained formulas will later serve as a guideline for the continuous case and as a point of comparison with alternative models.
    \item \emph{Broadness:} A random copula can be used to model an unknown dependence structure between random variables. We want our model to be applicable in situations when we have no prior knowledge about the dependence structure.
\end{enumerate}

In our first model of a random discrete copula induced by a permutation we use the uniform distribution on the set of all permutations on $k$ elements. This choice is the simplest one and is most often used in the discrete setting when no additional information is available. Given that it is also computable, it seems to be the most obvious choice. 
The same choice was made also in \cite{ErGoNe08}, where the symmetry of a random discrete copula is studied. In our second model of a random discrete copula on an equidistant mesh we use the uniform distribution on the simplex of coefficients in a convex combination, as explained above. 
This choice might not be the most natural one -- a more obvious choice would be the uniform distribution on the Birkhoff polytope itself, see \cite{ChaDiaSly10}. However, uniform distribution on the Birkhoff polytope violates computability, since even the formulas for the volume of the polytope are not in a closed form. Furthermore, the exact uniform sampling from the Birkhoff polytope is an open problem, although approximate sampling algorithms do exist, \cite{CapSomBruZyc09}. Since computability is one of our main goals, we prefer to use the uniform distribution on the simplex of coefficients. It is also possible to sample from this distribution directly.

The study of discrete copulas induced by permutations and general discrete copulas on an equidistant mesh was initiated in \cite{MaySunTor05} and \cite{KolMesMordSem06}. Matrix representations of such copulas were also explored there, however, randomness was not considered.
Copulas induced by permutations are also considered in \cite{ErGoNe08}, where the distribution of a specific measure of asymmetry of such copulas is determined and used as a test statistic for the symmetry of a copula.
Discrete copulas induced by permutations are tightly connected to empirical copulas, i.e., copula-like objects obtained from random samples, which are used in goodness-of-fit tests for copulas, see \cite{GeReBe09}.
In \cite{GenNesRem14} the asymptotic behavior of multivariate empirical copulas when the sample size tends to infinity is studied.  As already mentioned, random discrete copulas on an equidistant mesh are considered in \cite{ChaDiaSly10}, where the Lebesgue measure on the Birkhoff polytope is taken.
An approximate sampling algorithm for this setting is considered in \cite{CapSomBruZyc09}. The idea of using Dirichlet distribution on the simplex $\sigma^m$ of coefficients of a convex combination was previously applied in \cite{MelPet95} in Bayesian inference for contingency tables, a topic closely related to discrete copulas. 
The geometry of discrete copulas on a general mesh and their connection to various polytopes is discussed in \cite{PeSoUh19}, while imprecise discrete copulas, i.e., intervals of discrete copulas on an equidistant mesh, are considered in \cite{KoPe25}.
The authors of \cite{Gee20,KoMa24} discuss the use of copulas for pairs of discrete random variables and suggest a different approach for modeling the dependence structure in this setting.

One of the motivations for this study are possible applications. Random discrete copulas could be used in simulations, when we have very few information on the dependence structure. 
In some situations, the type of dependence could even change unpredictably over time (compare \cite{FeWe12,YaCaHaLi22}), so past data may not reflect well the future joint behavior of random variables.
In this case, random copulas would be a useful tool for generating artificial data samples.
Furthermore, one could test the performance of various statistical methods on pairs of random variables connected by randomly chosen copulas, rather than on pairs connected by specific copulas chosen by the tester (see also Section~\ref{se:simulation}). 
This would give a more objective assessment of the performance of the method in question. 

The main contribution of this paper is the conceptualization of randomness for copulas in the discrete setting. We introduce two models and for each of them we explicitly compute the expected value and the variance of the value of the random discrete copula at each point in its domain. Furthermore, we extend these two models to checkerboard copulas.
The paper is structured as follows. In Section~\ref{se:prelim} we give the definition of a bivariate (discrete) copula and recall some properties of the Dirichlet distribution. 
In Section~\ref{se:permutation} we introduce a random discrete copula induced by a permutation and determine the distribution, expected value and variance of its value at an arbitrary point. 
In Section~\ref{se:general} we define a random discrete copula on an equidistant mesh and study its properties, extending the results from Section~\ref{se:permutation}. We illustrate the distribution with an example.
Section~\ref{se:extension} is devoted to bilinear extension of a random discrete copula to a random field over the whole unit square.
In Section~\ref{se:simulation} we present a numerical simulation as an application of random discrete copulas to concordance measures.
We finish with some concluding remarks in Section~\ref{se:concluding}.

\section{Preliminaries} \label{se:prelim}

We start by recalling the definition of a copula. Denote with $\II=[0,1]$ the unit interval.

\begin{definition}
A function  $C\colon\II^2\to\II$ is called a bivariate \emph{copula}, if
\begin{enumerate}[(i)]
    \item $C$ is \emph{grounded}, i.e., $C(u, 0)=C(0, v)=0$ for all $u, v \in \II$,
    \item $C$ has \emph{uniform marginals}, i.e., $C(u, 1)=u$ and $C(1, v)=v$ for all $u, v \in \II$, and
    \item $C$ is \emph{$2$-increasing}, i.e., for any rectangle $R=[x,x+u]\times[y,y+v] \subseteq\II^2$ with $u,v\ge 0$, the $C$-volume of $R$ is nonnegative, 
$$\vol_C ([x,x+u]\times[y,y+v]) := C(x+u,y+v) - C(x+u,y) - C(x,y+v) + C(x,y) \ge 0.$$
\end{enumerate}
\end{definition}
Recall that the lower and upper bound for bivariate copulas are the Fr\'{e}chet-Hoeffding bounds
$$W(u,v)=\max \{0,u+v-1\} \qquad \text{and} \qquad M(u,v)=\min \{u,v \}.$$

Let $k \ge 2$ be an integer. We denote $[k]=\{1,2, \ldots, k\}$ and $[k]_0=[k] \cup\{0\}$. With $\dk= \{ 0,\tfrac1k, \tfrac2k, \ldots, \tfrac{k-1}{k}, 1 \}$ we denote an equidistant partition of the unit interval $\II$, and with $\DK= \dk \times \dk$ an equidistant mesh in $\II^2$.

\begin{definition}
A function  $C\colon\DK\to\II$ is called a bivariate \emph{discrete copula}, if
\begin{enumerate}[(i)]
    \item $C$ is \emph{grounded}, i.e., $C(\tfrac{i}{k}, 0)=C(0, \tfrac{j}{k})=0$ for all $i,j \in [k]_0$,
    \item $C$ has \emph{uniform marginals}, i.e., $C(\tfrac{i}{k}, 1)=\tfrac{i}{k}$ and $C(1, \tfrac{j}{k})=\tfrac{j}{k}$ for all $i, j \in [k]_0$, and
    \item $C$ is \emph{$2$-increasing}, i.e., for any $i,j \in [k]$ it holds that
\begin{equation*}
    V_C([\tfrac{i-1}{k},\tfrac{i}{k}]\times[\tfrac{j-1}{k},\tfrac{j}{k}]) = C(\tfrac{i}{k},\tfrac{j}{k}) + C(\tfrac{i-1}{k},\tfrac{j-1}{k}) - C(\tfrac{i-1}{k},\tfrac{j}{k}) - C(\tfrac{i}{k},\tfrac{j-1}{k}) \ge 0.
\end{equation*}
\end{enumerate}
\end{definition}

Any discrete copula $C$ can be extended to a copula $\widehat{C}$ by using bilinear functions. This (piecewise) bilinear extension is often referred to as a checkerboard copula, see \cite{MikTay10,LinWanZhaZha25}. For any $(u,v) \in \II^2$ let 
$$u = \tfrac{i}{k} + \tfrac{t}{k} \qquad \text{and} \qquad v = \tfrac{j}{k} + \tfrac{s}{k},$$
where $i=\lfloor uk \rfloor \in [k]_0$, $j=\lfloor vk \rfloor \in [k]_0$, $t = uk -i \in [0,1)$, and $s = vk -j \in [0,1)$. The copula $\widehat{C}$ is then given as
$$\widehat{C}(u,v) = (1-t)(1-s)C(\tfrac{i}{k},\tfrac{j}{k}) + t(1-s)C(\tfrac{i+1}{k},\tfrac{j}{k}) + (1-t)sC(\tfrac{i}{k},\tfrac{j+1}{k}) +tsC(\tfrac{i+1}{k},\tfrac{j+1}{k}).$$

We also recall some properties of the Dirichlet distribution (see \cite{JonKot72,KotBalJoh00}).
With ${\bf x}$ we denote the $m$-tuple ${\bf x}=(x_1,\ldots,x_m)$ and with $$\sigma^{m-1}= \{ \XX \in \RR^m \mid x_i \ge 0 \text{ for all $i$ and } \sum_{i=1}^m x_i =1 \}$$ the standard simplex of dimension $m-1$.
Furthermore, with $$\sigma_*^m= \{ \XX \in \RR^m \mid x_i \ge 0 \text{ for all $i$ and } \sum_{i=1}^m x_i \le 1 \}$$ we denote the corner simplex of dimension $m$. Note that $\sigma^{m-1}$ is a facet of $\sigma_*^m$ lying opposite to the origin.
With $\be_A$ we denote the characteristic function of the set $A$, i.e., $\be_A(\XX)=1$ if $\XX \in A$ and $\be_A(\XX)=0$ otherwise.

Let $\mathbf{X}=(X_1,X_2,\ldots,X_m)$ be a random vector with support $\sigma^{m-1}$. We say that $\mathbf{X}$ has a Dirichlet distribution $\Dir(\ba)$ with parameters $\ba=(a_1,a_2,\ldots,a_m) \in (0,\infty)^m$ if $X_{m}=1-X_1-X_2-\ldots-X_{m-1}$ and $(X_1,X_2,\ldots,X_{m-1})$ is a continuous random vector with density 
\begin{equation}\label{eq:density}
    p(x_1,x_2,\ldots,x_{m-1})=\frac{\Gamma(a_1+a_2+\ldots+a_m)}{\Gamma(a_1)\Gamma(a_2)\cdots\Gamma(a_m)} x_1^{a_1-1} x_2^{a_2-1} \cdots x_{m}^{a_{m}-1} \be_{\sigma_*^{m-1}}(x_1,x_2,\ldots,x_{m-1}),
\end{equation}
where $x_m=1-x_1-x_2-\ldots-x_{m-1}$.
In particular, if $\ba=(1,1,\ldots,1) \in (0,\infty)^m$ then $\Dir(\ba)$ is the uniform distribution on $\sigma^{m-1}$.

Let $\mathbf{X} \sim \Dir(\ba)$, where $\ba \in (0,\infty)^m$ and $a_0=a_1+a_2+\ldots+a_m$. Then
\begin{equation}\label{eq:EVC}
\begin{aligned}
    \EE[X_i] &=\frac{a_i}{a_0}, \\
    \VV[X_i] &=\frac{a_i(a_0-a_i)}{a_0^2(a_0+1)}, \quad \text{and}\\
    \Cov[X_i,X_j] &=-\frac{a_ia_j}{a_0^2(a_0+1)} \quad \text{for all $i \ne j$.}
\end{aligned}
\end{equation}
The Dirichlet distribution has the \emph{aggregation} property, namely, if $\mathbf{X} \sim \Dir(a_1,a_2,\ldots,a_m)$ and $\mathbf{Y}=(Y_1,Y_2,\ldots,Y_k)$, where $Y_i=\sum_{j \in I_i} X_j$ and $I_1,I_2,\ldots,I_k$ is a partition of the index set $[m]=\{1,2,\ldots,m\}$, then $\mathbf{Y} \sim \Dir(b_1,b_2,\ldots,b_k)$, where $b_i=\sum_{j \in I_i} a_j$.

\section{Random discrete copula induced by a permutation} \label{se:permutation}

In this section we introduce a random discrete copula induced by a permutation as a discrete random field over an equidistant mesh, and determine its basic properties. For an introduction to random fields, we refer the reader to \cite{AdlTay07}.

Let $k \ge 2$ be an integer. With $\SK$ we denote the set of all permutations of the set $[k]$, $\SK= \{ \pi \colon [k] \to [k] \mid  \pi \text{ bijective} \}$.
For any permutation $\pi \in \SK$, let $C_\pi$ be a discrete copula with mass $\frac{1}{k}$ in each square $[\frac{i-1}{k},\frac{i}{k}] \times [\frac{\pi(i)-1}{k},\frac{\pi(i)}{k}]$, $i \in [k]$, and mass $0$ in all remaining squares determined by the mesh $\DK$. We denote by
\begin{equation}\label{eq:P_k}
    \PK=\{ C_\pi \colon \DK \to \II \mid \pi \in \SK \}
\end{equation}
the set of all such discrete copulas. Note that $\displaystyle C_\pi$ can also be constructed as the restriction of a shuffle of $M$ to $\DK$, i.e., $\displaystyle C_\pi = M(k,\dk, \pi, 1)\big|_{\DK}$, (see \cite[\S 3.2.3]{Nel06} or \cite{KoBuMo22} for the definition of a shuffle). An example of copula $C_\pi$ with $\pi =  \left(\begin{smallmatrix}
       1 & 2 & 3 & 4 & 5 & 6 & 7 & 8 & 9 & 10 \\
       3 & 7 & 2 & 10 & 5 & 9 & 1 & 4 & 6 & 8
   \end{smallmatrix}\right)$ is shown in Figure~\ref{fig:Cpi}. The squares containing mass $\frac{1}{10}$ are shaded gray.

\begin{definition}
We choose a discrete copula $C_\pi\in \PK$ uniformly at random. For every $(u,v) \in \DK$ we denote by
$$X_k(u,v)=C_\pi(u,v)$$
the value of $C_\pi$ at the point $(u,v)$. This makes $X_k(u,v)$ a discrete random variable with range in $\dk$ and $X_k$ a discrete random field over $\DK$. We call $X_k$ a \emph{random discrete copula induced by a permutation}.      
\end{definition}

We will now determine the distribution of random variable $X_k(u,v)$ for every $(u,v) \in \DK$, i.e., compute the probabilities $\PP[X_k(u,v)= t]$ for $t \in \dk$. 
Since $(u,v) \in \DK$ and $t \in \dk$, there exist $i,j, \ell\in [k]_0$ such that 
$$u=\tfrac{i}{k},  \quad v=\tfrac{j}{k}, \quad \text{and} \quad t= \tfrac{\ell}{k}.$$ 
We fix this notation throughout this section.

\begin{proposition}\label{pr:3.2}
    For any $(u,v) \in \DK$ and $t \in \dk$, we have 
    \begin{equation} \label{eq:pxk=t}
        \PP[X_k(u,v)=t] = \frac{(ku)! \, (kv)! \, (k(1-u))! \, (k(1-v))!}{k! \, (k(u-t))! \, (k(v-t))! \, (k(1-u-v+t))! \, (kt)!}.
    \end{equation} 
\end{proposition}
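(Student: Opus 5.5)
Since $C_\pi$ is grounded and places mass $\frac1k$ in the squares $[\frac{r-1}{k},\frac rk]\times[\frac{\pi(r)-1}{k},\frac{\pi(r)}{k}]$, its value at $(u,v)=(\frac ik,\frac jk)$ is the total mass in $[0,u]\times[0,v]$:
$$C_\pi(\tfrac ik,\tfrac jk)=\tfrac1k\,\bigl|\{r\in[i] \mid \pi(r)\le j\}\bigr|.$$
Hence $X_k(u,v)=t=\frac{\ell}{k}$ holds exactly when $\pi$ maps exactly $\ell$ elements of $[i]$ into $[j]$. Since $C_\pi$ is chosen uniformly from $\PK$ and $\pi\mapsto C_\pi$ is a bijection (distinct permutations give distinct mass distributions), $\pi$ is uniform on $\SK$. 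So the probability is the number of such permutations divided by $k!$.

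To count these permutations, I would build them step by step:
\begin{enumerate}[(1)]
\item choose the $\ell$ elements of $[i]$ that go into $[j]$, in $\binom{i}{\ell}$ ways;
\item map them injectively into $[j]$, in $\frac{j!}{(j-\ell)!}$ ways;
\item map the remaining $i-\ell$ elements of $[i]$ injectively into $[k]\setminus[j]$, which has $k-j$ elements, in $\frac{(k-j)!}{(k-j-i+\ell)!}$ ways;
\item map the $k-i$ elements outside $[i]$ bijectively onto the remaining $k-i$ values, in $(k-i)!$ ways.
\end{enumerate}
Multiplying these counts gives
$$\frac{i!\,j!\,(k-i)!\,(k-j)!}{\ell!\,(i-\ell)!\,(j-\ell)!\,(k-i-j+\ell)!}.$$
Dividing by $k!$ and substituting $i=ku$, $j=kv$, $\ell=kt$ yields exactly \eqref{eq:pxk=t}. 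Equivalently, this is the hypergeometric law: the image $\pi([i])$ is a uniform $i$-subset of $[k]$, and $\ell$ counts how many of its elements lie in $[j]$.

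The only delicate point is the range of $t$. The count above is valid when $\max(0,i+j-k)\le \ell\le\min(i,j)$. Outside this range the probability is $0$, which matches the Fréchet bounds $W\le C\le M$. There I would adopt the usual convention that $1/n!=0$ for negative integers $n$, so that the formula still holds. I expect this boundary convention, rather than the counting itself, to be the only real obstacle.
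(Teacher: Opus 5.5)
Your argument is correct and is essentially the paper's proof. The paper also computes $|L_{ij\ell}|/k!$ by directly counting permutations that send exactly $\ell$ elements of $[i]$ into $[j]$. Your factors $\frac{j!}{(j-\ell)!}$, $\frac{(k-j)!}{(k-i-j+\ell)!}$ and $(k-i)!$ are just its $\binom{j}{\ell}\ell!$, $\binom{k-j}{i-\ell}(i-\ell)!$ and $\binom{k-i}{j-\ell}(j-\ell)!\,(k+\ell-i-j)!$ regrouped. For $t$ outside $[W(u,v),M(u,v)]$ the paper uses the same convention $\frac{1}{m!}=0$ for negative integers $m$.
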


\begin{proof}
Let 
\begin{equation}\label{eq:L_ijl}
     L_{ij\ell} = \{\pi \in \SK \mid C_\pi(\tfrac{i}{k},\tfrac{j}{k}) = \tfrac{\ell}{k}\}.
\end{equation}
If $\pi \in L_{ij\ell}$ then 
\begin{align*}
|\{m \in [k] \mid m\le i, \pi(m) \le j\}| &= \ell, \\
|\{m \in [k] \mid m\le i, \pi(m) \ge j+1\}| &= i-\ell, \\
|\{m \in [k] \mid m\ge i+1, \pi(m) \le j\}| &= j-\ell, \quad\text{and} \\
|\{m \in [k] \mid m\ge i+1, \pi(m) \ge j+1\}| &= k+\ell-i-j. 
\end{align*}
To obtain a permutation $\pi \in L_{ij\ell}$, we first choose a subset $A \subseteq [i]$ containing $\ell$ elements and a subset $B \subseteq [j]$ containing $\ell$ elements and choose any bijective mapping from $A$ to $B$. Next we choose a subset $C \subseteq [k]\setminus [j]$ containing $i-\ell$ elements and choose any bijective mapping from $[i]\setminus A$ to $C$. Next we choose a subset $D \subseteq [k]\setminus [i]$ containing $j-\ell$ elements and choose any bijective mapping from $D$ to $[j]\setminus B$. Finally, we choose any bijective mapping from $[k]\setminus([i]\cup D)$ to $[k]\setminus([j]\cup C)$. Example~\ref{ex:permutation}
illustrates this process. 

Counting these choices, it follows that 
\begin{align}
    |L_{ij\ell}|
    &= \binom{i}{\ell} \, \binom{j}{\ell} \, \ell! \, \binom{k-j}{i-\ell} \, (i-\ell)! \, \binom{k-i}{j-\ell} \, (j-\ell)! \,(k+\ell-i-j)! \nonumber\\
    &= \frac{i! \,j! \, \ell!\, (k-j)! \,(j-\ell)! \,(k-i)! \,(j-\ell)! \, (k+\ell-i-j)!}{\ell! \, (i-\ell)! \, \ell! \, (j-\ell)! \, (i-\ell)! \, (k+\ell-i-j)! \, (j-\ell)! \, (k+\ell-i-j)!} \label{eq:L_ijl_size}\\
    &= \frac{i! \, j! \, (k-i)! \, (k-j)!}{(i-\ell)! \, (j-\ell)! \, (k+\ell-i-j)! \, \ell!}. \nonumber
\end{align}
We now have
\begin{align*}
    \PP[X_k(u,v)=t] &= \PP[X_k(\tfrac{i}{k},\tfrac{j}{k})= \tfrac{\ell}{k}] = \frac{1}{k!} |L_{ij\ell}| \\
    &= \frac{i! \, j! \, (k-i)! \, (k-j)!}{k! \, (i-\ell)! \, (j-\ell)! \, (k+\ell-i-j)! \, \ell!} \\ 
    &= \frac{(ku)! \, (kv)! \, (k(1-u))! \, (k(1-v))!}{k! \, (k(u-t))! \, (k(v-t))! \, (k(1-u-v+t))! \, (kt)!},
\end{align*}
which finishes the proof. 
\end{proof}

Note that the probability $\PP[X_k(u,v)=t] = \PP[X_k(\tfrac{i}{k},\tfrac{j}{k})= \tfrac{\ell}{k}]$ corresponds to the probability that a randomly chosen permutation $\pi$ satisfies $\pi(r)\le j$ for precisely $\ell$ numbers $r \in \{1,2,\ldots,i\}$.

\begin{example} \label{ex:permutation}
   Let $k=10$, $i=4$, $j=5$, and $\ell=2$. We illustrate the process of choosing a permutation $\pi \in L_{452}$ from the proof of Proposition~\ref{pr:3.2}. We first choose the sets 
   \begin{align*}
       A &= \{1,3\} \subseteq [4], \\
       B &= \{2,3\} \subseteq [5], \\
       C &= \{7,10\} \subseteq [10]\setminus[5], \text{ and} \\
       D &= \{5,7,8\} \subseteq [10]\setminus[4]. 
   \end{align*}
   Then we choose bijective maps 
   $$\{1,3\} \to \{2,3\}, \quad \{2,4\} \to \{7,10\}, \quad \{5,7,8\} \to \{1, 4, 5\}, \quad \text{ and }  \quad \{6, 9, 10\} \to \{6, 8, 9\},$$ 
   and combine them into a permutation 
   $$ \pi = \begin{pmatrix}
       1 & 2 & 3 & 4 & 5 & 6 & 7 & 8 & 9 & 10 \\
       3 & 7 & 2 & 10 & 5 & 9 & 1 & 4 & 6 & 8
   \end{pmatrix}.$$
   The mass distribution of copula $C_\pi$ is depicted in Figure~\ref{fig:Cpi}. The rows and columns corresponding to sets $A,B,C,D$  are shadowed red, blue, green and yellow, respectively. 
\end{example}

\begin{figure}
    \centering
    \includegraphics[width=0.4\linewidth]{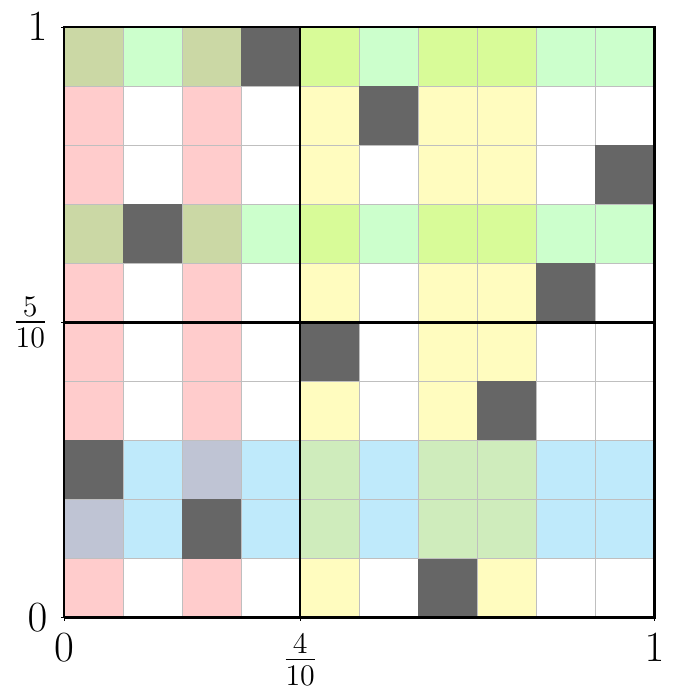}
    \caption{Mass distribution of discrete copula $C_\pi$ from Example~\ref{ex:permutation}. }
    \label{fig:Cpi}
\end{figure}

Note that if $t \in \dk$ and $\max \{0, u+v-1 \} \le t \le \min \{ u,v \}$, all the factorials in \eqref{eq:pxk=t} have nonnegative integer arguments, which makes $\PP[X_k(u,v)=t]\ne 0$. If $t < u+v-1$ or $t > \min \{ u,v \}$ then $\PP[X_k(u,v)=t]= 0$ since such $t$ violates the Fr\'{e}chet-Hoeffding bounds for copulas. Hence, we adopt the convention  $\tfrac{1}{m!} = 0$ if $m$ is a negative integer. This is consistent with the extension of the factorial to the Gamma function $m! = \Gamma(m+1)$ and the fact that the Gamma function has simple poles at negative integers. 

We now compute the expected value and variance of random variable $X_k(u,v)$. In the proof we will use Vandermonde's identity
\begin{equation} \label{eq:vsota-binomskih}
   \sum_{s=0}^{\min\{a,c \}} \binom{a}{s} \, \binom{b}{c-s}  = \sum_{s=0}^{a} \binom{a}{s} \, \binom{b}{c-s} = \sum_{s=0}^c \binom{a}{s} \, \binom{b}{c-s} = \binom{a+b}{c}.
\end{equation}

\begin{theorem} \label{th:exk}
    Let $k\ge 2$. For any $(u,v) \in \DK$ we have 
    $$ \EE[X_k(u,v)] = uv \qquad \text{ and } \qquad \VV[X_k(u,v)] = \frac{uv(1-u)(1-v)}{k-1}.$$
\end{theorem}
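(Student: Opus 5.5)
We will use the distribution from Proposition~\ref{pr:3.2}, rewritten in the hypergeometric form
$$\PP[X_k(\tfrac{i}{k},\tfrac{j}{k})=\tfrac{\ell}{k}] = \frac{\binom{i}{\ell}\binom{k-i}{j-\ell}}{\binom{k}{j}},$$
which follows directly from \eqref{eq:pxk=t} by regrouping the factorials. In other words, $kX_k(u,v)$ is hypergeometric: it is the number of the $j$ values $\pi^{-1}(1),\ldots,\pi^{-1}(j)$ that fall in $[i]$. Since $\pi^{-1}([j])$ is a uniformly random $j$-subset of $[k]$, we are drawing $j$ elements without replacement from a population of $k$ elements, of which $i$ are marked. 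The plan is to compute the first two factorial moments of $N=kX_k(u,v)$ using Vandermonde's identity \eqref{eq:vsota-binomskih}.

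\textbf{Mean.} Using $\ell\binom{i}{\ell}=i\binom{i-1}{\ell-1}$ and \eqref{eq:vsota-binomskih},
$$\EE[N]=\frac{i}{\binom{k}{j}}\sum_{\ell}\binom{i-1}{\ell-1}\binom{k-i}{j-\ell}=\frac{i\binom{k-1}{j-1}}{\binom{k}{j}}=\frac{ij}{k}.$$
Dividing by $k$ gives $\EE[X_k(u,v)]=uv$.

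\textbf{Variance.} Similarly, $\ell(\ell-1)\binom{i}{\ell}=i(i-1)\binom{i-2}{\ell-2}$, so
$$\EE[N(N-1)]=\frac{i(i-1)j(j-1)}{k(k-1)}.$$
Then
$$\VV[N]=\EE[N(N-1)]+\EE[N]-\EE[N]^2,$$
and we simplify this to
$$\VV[N]=\frac{ij(k-i)(k-j)}{k^2(k-1)}.$$
Dividing by $k^2$ gives $\VV[X_k(u,v)]=uv(1-u)(1-v)/(k-1)$.

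The only delicate points are boundary cases, namely $i$ or $j\in\{0,1\}$, where binomials with negative lower index appear. These are handled by the convention $1/m!=0$ for negative integers $m$, equivalently $\binom{a}{b}=0$ outside range, so that Vandermonde's identity still holds. Note also that $k\ge2$ is needed for the division by $k-1$.

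The main obstacle is the algebraic simplification in the variance step. Writing everything over the common denominator $k^2(k-1)$, the numerator becomes
$$k\,i(i-1)j(j-1)+ij\,k(k-1)-(k-1)i^2j^2,$$
which should factor as $ij(k-i)(k-j)$ after expansion.

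As a cross-check, there is an alternative route via indicators. Write $N=\sum_{r\le i}\mathbf 1[\pi(r)\le j]$, with $\PP[\pi(r)\le j]=j/k$ and $\PP[\pi(r)\le j,\pi(r')\le j]=j(j-1)/(k(k-1))$ for $r\ne r'$. This gives the same mean and variance without Vandermonde's identity.
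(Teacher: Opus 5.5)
Your proposal is correct and is essentially the paper's argument. The paper also gets the mean from Vandermonde's identity. For $\EE[X_k(u,v)^2]$ it reindexes with $s=\ell-1$ and splits the factor $s+1$ into $1+s$, which is exactly your decomposition $\ell^2=\ell+\ell(\ell-1)$ into factorial moments ($\binom{k-1}{j-1}$ and $(i-1)\binom{k-2}{j-2}$), followed by the same algebraic simplification. Your hypergeometric reading and indicator cross-check are a cleaner packaging of that computation. The boundary cases $i\le 1$ are harmless because there $\ell(\ell-1)\binom{i}{\ell}=0$ for every $\ell$; the paper likewise just notes that the sum is empty when $i=1$.
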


\begin{proof}
We first compute the expected value of variable $X_k(u,v)$. If $i=0$ then $X_k(u,v) = 0$ and the claim holds. So suppose that $i\ge 1$. Then
\begin{align*}
    \EE[X_k(u,v)] &= \sum_{\ell=\max \{ 0,i+j-k\}}^{\min \{i, j\}} \frac{\ell}{k}\cdot  \frac{i! \, j! \, (k-i)! \, (k-j)!}{k! \, (i-\ell)! \, (j-\ell)! \, (k+\ell-i-j)! \, \ell!} \\
    &= \frac{j! \, i \, (k-j)!}{k! \, k} \sum_{\ell=\max \{ 0,i+j-k\}}^{\min \{i, j\}} \frac{\ell\, (i-1)! \, (k-i)!}{\ell! \, (i-\ell)! \, (j-\ell)! \, (k+\ell-i-j)!}.
\end{align*}
Note that the  expression in the last sum is equal to $0$ for $\ell = 0$ and $\ell < i+j-k$, due to the convention that $\tfrac{1}{m!}=0$ for negative integers $m$. So this sum can be taken from $\ell =1$ on and can be simplified as follows
\begin{align*}
    \sum_{\ell=\max \{ 0,i+j-k\}}^{\min \{i, j\}} &\frac{\ell\, (i-1)! \, (k-i)!}{\ell! \, (i-\ell)! \, (j-\ell)! \, (k+\ell-i-j)!}\\
    &= \sum_{\ell=1}^{\min \{i, j\}} \frac{(i-1)! \, (k-i)!}{(\ell-1)! \, (i-\ell)! \, (j-\ell)! \, (k+\ell-i-j)!} \\
    &= \sum_{\ell=1}^{\min \{i, j\}} \binom{i-1}{\ell-1} \, \binom{k-i}{j-\ell} \\
    &= \sum_{s=0}^{\min \{i, j\}-1} \binom{i-1}{s} \, \binom{k-i}{j-1-s} = \binom{k-1}{j-1},
\end{align*}
where in the last step we used formula \eqref{eq:vsota-binomskih}.
But then
\begin{align*}
    \EE[X_k(u,v)] &= \frac{j! \, i \, (k-j)!}{k! \, k} \binom{k-1}{j-1} = \frac{j! \, i \, (k-j)!}{k! \, k} \cdot \frac{(k-1)!}{(j-1)! \, (k-j)!} = \frac{ij}{k^2} = uv.
\end{align*}

To determine $\VV[X_k(u,v)]$, we first prove that  $$\EE\big[X_k(u,v)^2\big] = \frac{ij}{k^3} + \frac{i(i-1)j(j-1)}{k^3 (k-1)}.$$
If $i=0$ or $j=0$ then $X_k(u,v) = 0$ and the formula holds. So suppose that $i,j \ge 1$. Using similar sum manipulations as above, and reindexing the sum with $s=\ell-1$, we obtain
\begin{align*}
    \EE\big[X_k(u,v)^2\big] &= \sum_{\ell=\max \{0,  i+j-k \}}^{\min \{i,j \}} \frac{\ell^2}{k^2}\cdot \frac{i! \, j! \, (k-i)! \, (k-j)!}{k! \, \ell! \, (j-\ell)! \, (k+\ell-i-j)!} \\
    &= \frac{i \, j! \, (k-j)!}{k! \, k^2 } \sum_{\ell=1}^{\min \{ i,j \}} \ell \, \binom{i-1}{\ell-1} \, \binom{k-i}{j-\ell} \\
    &= \frac{i}{k^2 \, \binom{k}{j}} \sum_{s=0}^{\min \{ i,j \}-1} (s+1) \, \binom{i-1}{s} \, \binom{k-i}{j-s-1} \\
    &= \frac{i}{k^2 \, \binom{k}{j}} \sum_{s=0}^{\min \{ i,j \}-1} \binom{i-1}{s} \, \binom{k-i}{j-s-1} \\
    & \qquad \qquad+ \frac{i}{k^2 \, \binom{k}{j}} \sum_{s=1}^{\min \{ i,j \}-1} \frac{s \, (i-1)!}{s! \, (i-s-1)!} \binom{k-i}{j-s-1}.
\end{align*}
The first sum equals $\binom{k-1}{j-1}$ by equation \eqref{eq:vsota-binomskih}. If $i>1$ the second sum is
\begin{align*}
\sum_{s=1}^{\min \{ i,j \}-1} \frac{s \, (i-1)!}{s! \, (i-s-1)!} \binom{k-i}{j-s-1} &= (i-1)\sum_{s=1}^{\min \{ i,j \}-1} \frac{(i-2)!}{(s-1)! \, (i-s-1)!} \binom{k-i}{j-s-1} \\
&= (i-1)\sum_{s=1}^{\min \{i,j \}-1 } \binom{i-2}{s-1} \, \binom{k-i}{j-s-1} \\
&= (i-1)\binom{k-2}{j-2}
\end{align*}
by equation \eqref{eq:vsota-binomskih}. The above formula also holds if $i=1$, since the sum is empty in this case. We thus obtain
\begin{align}
    \EE\big[X_k(u,v)^2\big] &= \frac{i}{k^2 \, \binom{k}{j}} \binom{k-1}{j-1} + \frac{i(i-1)}{k^2 \binom{k}{j}} \binom{k-2}{j-2} \label{eq:E(X_k^2)}\\
    &= \frac{ij}{k^3} + \frac{i(i-1) \, j!\, (k-j)! \, (k-2)!}{k^2 \, k! \, (j-2)! \, (k-j)!} = \frac{ij}{k^3} + \frac{i(i-1)j(j-1)}{k^3 (k-1)}, \nonumber
\end{align}
as claimed. Finally, we determine the variance of $X_k(u,v)$
\begin{align*}
    \VV[X_k(u,v)] &= \EE\big[X_k(u,v)^2\big] - \EE\big[X_k(u,v)\big]^2 = \frac{ij}{k^3} + \frac{i(i-1)j(j-1)}{k^3(k-1)} - \frac{i^2 j^2}{k^4} \\
    &= \frac{ij(k^2-ki-kj+ij)}{k^4(k-1)}= \frac{i}{k}\cdot \frac{j}{k}\cdot \frac{k-i}{k}\cdot \frac{k-j}{k}\cdot \frac{1}{k-1} = \frac{uv(1-u)(1-v)}{k-1},
\end{align*}
which finishes the proof. 
\end{proof}

As anticipated, the expected value of $X_k$ is the product copula. Furthermore, the variance is the largest in the middle of the unit square and drops to zero at its boundary. This is also expected since $M(u,v)-W(u,v)=\min \{u,v,1-u,1-v\}$.

\medskip

Notice that for any $(u,v) \in \II^2$ the $C$-volume of a rectangle $[0,u]\times[0,v]$ equals the value $C(u,v)$, i.e. $\vol_C([0,u]\times[0,v])=C(u,v)$.
Now we can rewrite \eqref{eq:pxk=t} in terms of $C$-volume of rectangle $[0,u]\times[0,v]$ for $u=\tfrac{i}{k}$ and $v=\tfrac{j}{k}$, so that for $t=\tfrac{\ell}{k}$ we have
$$\PP[X_k(u,v)= t] = \PP[\vol_{X_k}([0,u]\times[0,v])=t].$$
Here, we interpret $\vol_{X_k}$ as the volume with respect to a particular realization of $X_k$, i.e., as the composition of the volume function with random discrete copula $X_k$.
To generalize this result, we want to replace the rather specific rectangle $[0,u]\times[0,v]$ in the formula with a general one, $[x,x+u]\times[y,y+v]$  with $x,y,x+u,y+v \in \dk$, and compute
$\PP\big[\vol_{X_k}([x,x+u]\times[y,y+v])=t\big]$.

\begin{proposition}
For any $x,y,x+u,y+v,t \in \dk$ with $u,v \ge 0$ we have
$$\PP\big[\vol_{X_k}([x,x+u]\times[y,y+v])=t\big] = \PP[X_k(u,v)=t].$$  
Hence,
$\PP\big[\vol_{X_k}([x,x+u]\times[y,y+v])=t\big]$ is independent of the position of the rectangle and depends only on its dimensions.
\end{proposition}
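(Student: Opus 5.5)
The plan is to reduce the statement to Proposition~\ref{pr:3.2} by a bijection on $\SK$ that preserves the uniform measure. Write $x=\tfrac{a}{k}$, $y=\tfrac{b}{k}$, $u=\tfrac{i}{k}$, $v=\tfrac{j}{k}$, $t=\tfrac{\ell}{k}$, so that $a+i\le k$ and $b+j\le k$. First I will express the volume combinatorially. Since $C_\pi$ has mass $\tfrac1k$ in the square $[\tfrac{m-1}{k},\tfrac mk]\times[\tfrac{\pi(m)-1}{k},\tfrac{\pi(m)}{k}]$ and no mass elsewhere, and the $C$-volume is additive over the mesh squares, we get
$$\vol_{C_\pi}([x,x+u]\times[y,y+v]) = \tfrac1k\,\bigl|\{m\in[k] \mid a< m\le a+i,\ b<\pi(m)\le b+j\}\bigr|.$$
In particular, for $a=b=0$ this recovers $C_\pi(u,v)=\tfrac1k|\{m\le i \mid \pi(m)\le j\}|$, which is the description used in the proof of Proposition~\ref{pr:3.2}.

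Next, fix a permutation $\alpha\in\SK$ mapping $\{1,\ldots,i\}$ onto $\{a+1,\ldots,a+i\}$ bijectively. For instance, take the cyclic shift $m\mapsto m+a \pmod k$ with representatives in $[k]$; it sends $[i]$ onto $\{a+1,\ldots,a+i\}$ because $a+i\le k$. Similarly choose a cyclic shift $\beta\in\SK$ mapping $\{1,\ldots,j\}$ onto $\{b+1,\ldots,b+j\}$. Define $\Phi(\pi)=\beta^{-1}\circ\pi\circ\alpha$. This is a bijection of $\SK$, so it preserves the uniform distribution. Moreover, $m\in[i]$ with $\Phi(\pi)(m)\le j$ holds exactly when $m'=\alpha(m)\in\{a+1,\ldots,a+i\}$ and $\pi(m')\in\{b+1,\ldots,b+j\}$. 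Hence
$$\vol_{C_\pi}([x,x+u]\times[y,y+v]) = C_{\Phi(\pi)}(u,v).$$

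It then follows that
$$\PP[\vol_{X_k}([x,x+u]\times[y,y+v])=t] = \tfrac{1}{k!}\bigl|\{\pi \mid C_{\Phi(\pi)}(u,v)=t\}\bigr| = \tfrac{1}{k!}|L_{ij\ell}| = \PP[X_k(u,v)=t].$$
The explicit value is then given by Proposition~\ref{pr:3.2}, and it depends only on $u,v,t$, not on $x,y$.

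The only delicate step is the volume identity: we must check that the telescoping sum of the four corner values of $C_\pi$ counts exactly the points $(m,\pi(m))$ inside the rectangle. This follows directly from inclusion–exclusion on the counting description of $C_\pi$. The remaining bookkeeping consists of making sure the shifts are genuine bijections of $[k]$ and handling the degenerate cases $u=0$ or $v=0$, where both sides are point masses at $0$.
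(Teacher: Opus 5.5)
Your proof is correct, but it takes a different route from the paper.

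The paper does not reduce to Proposition~\ref{pr:3.2}. It repeats the counting argument from scratch for the shifted rectangle: it chooses $A\subseteq\{a+1,\ldots,a+i\}$, $B\subseteq\{b+1,\ldots,b+j\}$, then $C$ and $D$, and the bijections between them. It then observes that the resulting product of binomials and factorials for $|L_{R,t}|$ is literally the same as the one for $|L_{ij\ell}|$.

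You instead exhibit the bijection $\pi\mapsto\beta^{-1}\circ\pi\circ\alpha$ of $\SK$, which carries $L_{R,t}$ onto $L_{ij\ell}$. The equality of cardinalities then follows from the invariance of the uniform measure on $\SK$ under left and right translation, with no recount.

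What your route buys:
\begin{itemize}
\item It is shorter and explains \emph{why} the position of the rectangle is irrelevant.
\item It proves more. Nothing in the argument uses contiguity: $\alpha$ and $\beta$ may be any permutations with $\alpha([i])=I$ and $\beta([j])=J$ for arbitrary $I,J\subseteq[k]$ with $|I|=i$, $|J|=j$. So the total mass assigned by $X_k$ to the union of mesh squares indexed by $I\times J$ also has the law of $X_k(u,v)$.
\end{itemize}

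The paper's direct count re-derives the value of $|L_{R,t}|$ explicitly, at the cost of redoing the bookkeeping of Proposition~\ref{pr:3.2}.

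Your details are fine. The cyclic shifts work because $a+i\le k$ and $b+j\le k$, and the inclusion--exclusion identity for the volume is correct. The degenerate cases $i=0$ or $j=0$ need no separate treatment, since $[0]=\emptyset$ and the bijection argument covers them automatically.
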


\begin{proof}
Let $x=\tfrac{a}{k}$, $y=\tfrac{b}{k}$, and extend the definition of the set $L_{ij\ell}$ in \eqref{eq:L_ijl} for a general rectangle:
\begin{equation*}
     L_{R,t} = \{\pi \in \SK \mid \vol_{X_k}([x,x+u]\times[y,y+v])= t\}.
\end{equation*}
If $\pi \in L_{R,t}$, then
\begin{align*}
|\{m \in [k] \mid a+1 \le m \le a+i, b +1 \le \pi(m) \le b+j \}| &= \ell, \\
|\{m \in [k] \mid a+1 \le m \le a+i, \pi(m) \le b \,\, \text{or} \,\, \pi(m) \ge b+j+1 \}| &= i-\ell, \\
|\{m \in [k] \mid m \le a \,\, \text{or} \,\, m \ge a+i+1,  b+1 \le \pi(m) \le b+j \}| &= j-\ell, \quad\text{and} \\
|\{m \in [k] \mid m \le a \,\, \text{or} \,\, m \ge a+i+1, \pi(m) \le b \,\, \text{or} \,\, \pi(m) \ge b+j+1\}| &= k+\ell-i-j. 
\end{align*}
To obtain a permutation $\pi \in L_{R,t}$, we first choose a subset $A \subseteq \{a+1,\ldots,a+i\}$ containing $\ell$ elements, a subset $B \subseteq \{b+1,\ldots,b+j\}$ also containing $\ell$ elements, and choose any bijective mapping from $A$ to $B$. Next we choose a subset $C \subseteq [k]\setminus \{b+1,\ldots,b+j\}$ containing $i-\ell$ elements and choose any bijective mapping from $\{a+1,\ldots,a+i\}\setminus A$ to $C$. Then we choose a subset $D \subseteq [k]\setminus \{a+1,\ldots,a+i\}$ containing $j-\ell$ elements and choose any bijective mapping from $D$ to $\{b+1,\ldots,b+j\}\setminus B$. Finally, we choose any bijective mapping from $[k]\setminus(\{a+1,\ldots,a+i\}\cup D)$ to $[k]\setminus(\{b+1,\ldots,b+j\}\cup C)$.
Counting the choices, it follows that
\begin{align*}
    |L_{R,t}|
    &= \binom{i}{\ell} \, \binom{j}{\ell} \, \ell! \, \binom{k-j}{i-\ell} \, (i-\ell)! \, \binom{k-i}{j-\ell} \, (j-\ell)! \,(k+\ell-i-j)!=|L_{ij\ell}|,
\end{align*}
so we have
$$\PP\big[\vol_{X_k}([x,x+u]\times[y,y+v])=t\big] = \PP\big[\vol_{X_k}([0,u]\times[0,v])=t\big]=\PP[X_k(u,v)=t],$$
which finishes the proof.
\end{proof}

\section{Random discrete copula on an equidistant mesh} \label{se:general}

So far we have determined the distribution, the expected value and the variance of a value of a random discrete copula induced by a permutation at each point of $\DK$. Now we would like to extend this notion to discrete copulas defined on $\DK$. 

With $\BK$ we denote the set of all discrete copulas defined on $\DK$. These are in a one-to-one correspondence with the Birkhoff polytope $\mathscr{B}_k$, i.e., the set of all $k \times k$ bistochastic matrices, see \cite[Proposition~2]{KolMesMordSem06}. It follows from \cite[Proposition~4]{KolMesMordSem06} that $\BK = \textrm{co}(\PK)$, the convex hull of $\PK$ (see equation~\eqref{eq:P_k}), i.e., any discrete copula on $\DK$ can be written as a finite convex combination of discrete copulas from $\PK$.

Recall that $\SK$ denotes the set of all permutations of $[k]$. As announced in the introduction, we will consider convex combinations of elements from $\PK$ and assume that the $k!$-tuples $\aaa=(\alpha_\pi)_{\pi \in \SK}$ of their coefficients are distributed uniformly on simplex $\sigma^{k!-1}$ with respect to the Lebesgue measure.

\begin{definition}\label{def:Y_k}
We choose $\aaa \in \sigma^{k!-1}$ uniformly at random with respect to the Lebesgue measure.  For every $(u,v) \in \DK$ we define
$$Y_k(u,v)= \sum_{\pi\in \SK} \alpha_\pi \, C_{\pi}(u,v).$$
For every $(u,v) \in \DK$ from the interior of $\II^2$, this makes $Y_k(u,v)$ a continuous random variable whose range is a subset of $\II$. Thus, $Y_k$ is a discrete random field over $\DK$. We call $Y_k$ a \emph{random discrete copula}.
\end{definition}

 Since the uniform distribution on $\sigma^{k!-1}$ is a Dirichlet distribution with all parameters equal to $1$, we have by equation~\eqref{eq:EVC} 
\begin{equation}\label{eq:EVC_uni}
\begin{aligned}
    \EE[\alpha_\pi] &=\frac{1}{k!} \quad\text{for all } \pi \in \SK, \\
    \VV[\alpha_\pi] &=\frac{k!-1}{(k!)^2(k!+1)} \quad\text{for all } \pi \in \SK, \quad \text{and}\\
    \Cov[\alpha_\pi,\alpha_\tau] &=-\frac{1}{(k!)^2(k!+1)} \quad \text{for all } \pi, \tau \in \SK, \pi \ne \tau.
\end{aligned}
\end{equation}

In order to prove the main theorem of this section, we need the following lemma.
\begin{lemma} \label{lem:cov_YY}
    Let $k \geq 2$. For any $(u,v), (u',v')\in \DK$ we have 
    $$\Cov[Y_k(u,v),Y_k(u',v')] =\frac{1}{k!+1}\Cov[X_k(u,v),X_k(u',v')].$$
\end{lemma}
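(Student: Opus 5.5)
We expand the covariance bilinearly in the Dirichlet coefficients, treating the values $C_\pi(u,v)$ as deterministic constants. Write $a_\pi = C_\pi(u,v)$ and $b_\pi = C_\pi(u',v')$. Then $Y_k(u,v)=\sum_\pi \alpha_\pi a_\pi$ and $Y_k(u',v')=\sum_\tau \alpha_\tau b_\tau$. By bilinearity of covariance,
$$\Cov[Y_k(u,v),Y_k(u',v')] = \sum_{\pi,\tau\in\SK} a_\pi b_\tau \Cov[\alpha_\pi,\alpha_\tau].$$
Now substitute the moments from \eqref{eq:EVC_uni}. Set $N=k!$ and observe that $\VV[\alpha_\pi] = \frac{N-1}{N^2(N+1)} = \frac{1}{N(N+1)} - \frac{1}{N^2(N+1)}$. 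With this, both the diagonal and off-diagonal cases are captured by the single formula
$$\Cov[\alpha_\pi,\alpha_\tau] = \frac{1}{N+1}\Big(\frac{\delta_{\pi\tau}}{N} - \frac{1}{N^2}\Big).$$

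Plugging this into the double sum gives
$$\Cov[Y_k(u,v),Y_k(u',v')] = \frac{1}{N+1}\Big(\frac1N\sum_\pi a_\pi b_\pi - \frac{1}{N^2}\sum_\pi a_\pi \sum_\tau b_\tau\Big).$$
Since $X_k$ is obtained by choosing $\pi$ uniformly from $\SK$, i.e.\ with probability $1/N$ each, the two terms are
$$\frac1N\sum_\pi a_\pi b_\pi = \EE[X_k(u,v)X_k(u',v')], \qquad \frac1N\sum_\pi a_\pi = \EE[X_k(u,v)], \qquad \frac1N\sum_\tau b_\tau=\EE[X_k(u',v')].$$
Hence the bracket equals $\Cov[X_k(u,v),X_k(u',v')]$, which proves the claim.

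One point needs care: $X_k(u,v)$ and $X_k(u',v')$ must be evaluated on the same random permutation $\pi$, since that is what makes the diagonal sum a joint expectation. This holds because $X_k$ is a random field indexed by a single uniformly chosen $C_\pi$. The only other potential obstacle is the regrouping of the diagonal term, and it is routine. The case $(u,v)=(u',v')$ is included, so the variance relation $\VV[Y_k]=\VV[X_k]/(k!+1)$ follows as a special case.
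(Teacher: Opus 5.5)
Your proof is correct and follows the same route as the paper. You expand the covariance bilinearly in the Dirichlet coefficients, use the moments from \eqref{eq:EVC_uni}, and absorb the diagonal term so the sum becomes $\frac{1}{k!}\sum_\pi C_\pi(u,v)C_\pi(u',v')$ minus a product of averages, which you then read as moments of $X_k$; your Kronecker-delta formula for $\Cov[\alpha_\pi,\alpha_\tau]$ is just a compact form of the paper's regrouping step.
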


\begin{proof}
Let $(u,v), (u',v')\in \DK$. Using equation~\eqref{eq:EVC_uni}, we compute
    \begin{align*}
    \Cov[Y_k(u,v),&Y_k(u',v')] =\sum_{\pi \in \SK} \VV[\alpha_\pi] C_{\pi}(u,v)C_{\pi}(u',v')+\sum_{\substack{\pi,\tau \in \SK\\ \pi \ne \tau}} \Cov[\alpha_\pi,\alpha_\tau] C_{\pi}(u,v)C_{\tau}(u',v') \\
    &=\frac{k!-1}{(k!)^2(k!+1)} \sum_{\pi \in \SK} C_{\pi}(u,v)C_{\pi}(u',v')-\frac{1}{(k!)^2(k!+1)} \sum_{\substack{\pi,\tau \in \SK\\ \pi \ne \tau}} C_{\pi}(u,v)C_{\tau}(u',v') \\
    &=\frac{1}{k!(k!+1)} \sum_{\pi \in \SK} C_{\pi}(u,v)C_{\pi}(u',v')-\frac{1}{(k!)^2(k!+1)} \sum_{\pi,\tau \in \SK} C_{\pi}(u,v)C_{\tau}(u',v') \\
    &=\frac{1}{k!+1}\left(\frac{1}{k!}\sum_{\pi \in \SK} C_{\pi}(u,v)C_{\pi}(u',v')-\Big(\frac{1}{k!}\sum_{\pi \in \SK} C_{\pi}(u,v)\Big)\Big(\frac{1}{k!}\sum_{\tau \in \SK} C_{\tau}(u',v')\Big)\right) \\
    &=\frac{1}{k!+1}\big(\EE[X_k(u,v)X_k(u',v')]-\EE[X_k(u,v)]\, \EE[X_k(u',v')]\big) \\
    &=\frac{1}{k!+1}\Cov[X_k(u,v),X_k(u',v')],
\end{align*}
which finishes the proof.
\end{proof}

In the next theorem we compute the expected value and variance of random variable $Y_k(u,v)$.
\begin{theorem} \label{th:eyk}
    Let $k\ge 2$. For any $(u,v) \in \DK$ we have 
    $$\EE[Y_k(u,v)] =\EE[X_k(u,v)]= uv$$
    and
    $$\VV[Y_k(u,v)] =\frac{1}{k!+1}\VV[X_k(u,v)] = \frac{uv(1-u)(1-v)}{(k!+1)(k-1)}.$$
\end{theorem}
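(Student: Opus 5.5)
The plan is to get both claims from linearity and from Lemma~\ref{lem:cov_YY}, and then to feed in Theorem~\ref{th:exk}.

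\emph{Expected value.} Fix $(u,v)\in\DK$. Since $Y_k(u,v)=\sum_{\pi\in\SK}\alpha_\pi C_\pi(u,v)$ is a finite sum and each $C_\pi(u,v)$ is deterministic, linearity of expectation together with $\EE[\alpha_\pi]=\frac{1}{k!}$ from \eqref{eq:EVC_uni} gives
$$\EE[Y_k(u,v)]=\sum_{\pi\in\SK}\frac{1}{k!}\,C_\pi(u,v).$$
The right-hand side is exactly the expectation of $C_\pi(u,v)$ when $\pi$ is uniform on $\SK$. By definition this is $\EE[X_k(u,v)]$, and Theorem~\ref{th:exk} evaluates it as $uv$.

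\emph{Variance.} Apply Lemma~\ref{lem:cov_YY} with $(u',v')=(u,v)$. Since $\Cov[Z,Z]=\VV[Z]$, this yields
$$\VV[Y_k(u,v)]=\frac{1}{k!+1}\VV[X_k(u,v)].$$
Substituting $\VV[X_k(u,v)]=\frac{uv(1-u)(1-v)}{k-1}$ from Theorem~\ref{th:exk} gives
$$\VV[Y_k(u,v)]=\frac{uv(1-u)(1-v)}{(k!+1)(k-1)}.$$

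The only computation of substance, namely expanding the variance of a Dirichlet-weighted sum into the $\VV[\alpha_\pi]$ and $\Cov[\alpha_\pi,\alpha_\tau]$ terms and recognising the result as a covariance under the uniform measure on $\SK$, has already been carried out in Lemma~\ref{lem:cov_YY}. What remains is to check that the lemma covers the diagonal case $(u',v')=(u,v)$, which it does, since it holds for arbitrary pairs of mesh points.
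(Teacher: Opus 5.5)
Your proposal is correct and matches the paper's proof: linearity with $\EE[\alpha_\pi]=\frac{1}{k!}$ identifies $\EE[Y_k(u,v)]$ with $\EE[X_k(u,v)]=uv$. Lemma~\ref{lem:cov_YY} applied with $(u',v')=(u,v)$, together with Theorem~\ref{th:exk}, then gives the variance.
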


\begin{proof}
Let $(u,v) \in \DK$. Using equation~\eqref{eq:EVC_uni} and Theorem~\ref{th:exk}, we compute
$$\EE[Y_k(u,v)]= \sum_{\pi \in \SK} \EE[\alpha_\pi] C_{\pi}(u,v) = \frac{1}{k!} \sum_{\pi \in \SK} C_{\pi}(u,v) = \EE[X_k(u,v)] = uv.$$
Taking $(u',v')=(u,v)$ in Lemma~\ref{lem:cov_YY} and using Theorem~\ref{th:exk}, we obtain
$$\VV[Y_k(u,v)]=\frac{1}{k!+1}\VV[X_k(u,v)]=\frac{uv(1-u)(1-v)}{(k!+1)(k-1)}.$$
This concludes the proof.
\end{proof}

Next, we derive a formula for the cumulative distribution function of $Y_k(u,v)$.
\begin{proposition}
For every $(u,v)=(\tfrac{i}{k},\tfrac{j}{k}) \in \DK$ let
$$N=\min\{i,j\}-\max\{0,i+j-k\},$$
and for every $\ell \in \bigl\{\max\{0,i+j-k\},\ldots,\min\{i,j\} \bigr\}$ let
$$\hell=\ell-\max\{0,i+j-k\}, \quad \text{and} \quad n_{\hell}=|L_{ij\ell}|.$$
Then for every $w \in \II$ we have
    \begin{equation*}
    \PP[Y_k(u,v) \le w] =\frac{(k!-1)!}{\prod_{\hell=0}^N (n_{\hell}-1)!} \int_{\substack{t_1,t_2,\ldots,t_N \ge 0\hspace{0.67cm}\\t_1+t_2+\ldots+t_N \le 1 \hspace{1.11cm}\\ 1t_1+2t_2+\ldots+Nt_N \le k(w-W(u,v))}} t_0^{n_{0}-1} t_1^{n_{1}-1}\cdots t_N^{n_{N}-1} dt_1 \ldots dt_N,
\end{equation*}
where $t_0=1-t_1-t_2-\ldots-t_N$.
\end{proposition}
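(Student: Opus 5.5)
Since each $C_\pi(u,v)$ takes only the values $\tfrac{\ell}{k}$ with $\max\{0,i+j-k\}\le \ell\le \min\{i,j\}$, we group the coefficients $\alpha_\pi$ by the value of $C_\pi(u,v)$. For $\hell=0,1,\ldots,N$ set
$$T_{\hell}=\sum_{\pi \in L_{ij\ell}} \alpha_\pi, \qquad \ell=\hell+\max\{0,i+j-k\}.$$
The sets $L_{ij\ell}$, with $\ell$ in this range, partition $\SK$, and by Proposition~\ref{pr:3.2} each has size $n_{\hell}=|L_{ij\ell}|\ge 1$. The aggregation property of the Dirichlet distribution then gives $(T_0,T_1,\ldots,T_N)\sim \Dir(n_0,n_1,\ldots,n_N)$. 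Here $\sum_{\hell} n_{\hell}=k!$, which is where the normalizing constant $\Gamma(k!)/\prod\Gamma(n_{\hell}) = (k!-1)!/\prod (n_{\hell}-1)!$ comes from.

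Next we rewrite $Y_k(u,v)$ in terms of the $T_{\hell}$. Since $\max\{0,i+j-k\}/k = W(u,v)$ and $\sum_{\hell} T_{\hell}=1$,
$$Y_k(u,v)=\sum_{\hell=0}^N \frac{\hell+\max\{0,i+j-k\}}{k}\, T_{\hell} = W(u,v) + \frac{1}{k}\sum_{\hell=1}^N \hell\, T_{\hell}.$$
It follows that $Y_k(u,v)\le w$ if and only if $1T_1+2T_2+\cdots+NT_N\le k(w-W(u,v))$.

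We then write the probability of this event using the density \eqref{eq:density}. We take $(T_1,\ldots,T_N)$ as the free coordinates and eliminate $T_0=1-T_1-\cdots-T_N$; the formula in \eqref{eq:density} eliminates the last coordinate, but the density is symmetric under relabeling, so eliminating $T_0$ instead is harmless. Integrating this density over the region of $\sigma_*^N$ cut out by the linear constraint gives exactly the stated formula.

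The proof is essentially immediate. The only points needing care are the following.
\begin{enumerate}[(i)]
\item Each $n_{\hell}\ge 1$, which holds because all factorial arguments are nonnegative in the admissible range, so the Dirichlet parameters are positive.
\item The degenerate case $N=0$, which occurs on the boundary of $\II^2$. There $Y_k(u,v)=W(u,v)$ is constant, and the formula should be read with the empty integral interpreted as an indicator of $w\ge W(u,v)$ and the prefactor equal to $1$. We will remark on this convention rather than treat it separately.
\end{enumerate}
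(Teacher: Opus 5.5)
Your proposal is correct and follows the paper's proof essentially step for step. You aggregate the uniform Dirichlet coefficients over the classes $L_{ij\ell}$ to get $\Dir(n_0,\ldots,n_N)$, rewrite $Y_k(u,v)=W(u,v)+\frac{1}{k}\sum_{\hell=1}^N \hell\, T_{\hell}$, and integrate the Dirichlet density over the truncated corner simplex $\sigma_*^N$. Your extra remarks are also correct and make explicit points the paper leaves implicit: eliminating $T_0$ rather than the last coordinate by symmetry, positivity of the $n_{\hell}$, and reading the formula as the indicator of $w\ge W(u,v)$ in the boundary case $N=0$.
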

\begin{proof}
We first collect the terms of the sum for $Y_k$, see Definition~\ref{def:Y_k}, that have the same value $C_{\pi}(u,v)=\frac{\ell}{k}$ for some $\ell \in \bigl\{\max\{0,i+j-k\},\ldots,\min\{i,j\}\bigr\}$.
By equation~\eqref{eq:L_ijl}, we have
\begin{equation*}
    Y_k(u,v) =\sum_{\pi \in \SK} \alpha_\pi C_{\pi}(u,v) =\sum_{\ell=\max\{0,i+j-k\}}^{\min\{i,j\}} \Big(\sum_{\pi \in L_{ij\ell}}\alpha_\pi\Big) \frac{\ell}{k}=\sum_{\ell=\max\{0,i+j-k\}}^{\min\{i,j\}} \beta_{ij\ell} \cdot \frac{\ell}{k},
\end{equation*}
where $\beta_{ij\ell}=\sum_{\pi \in L_{ij\ell}}\alpha_\pi$. Next, we re-index the above sum, so that the summation index $\hell=\ell-\max\{0,i+j-k\}$ runs from $0$ to $N=\min\{i,j\}-\max\{0,i+j-k\}$. Since $\aaa=(\alpha_\pi)_{\pi \in \SK}$ is distributed uniformly on $\sigma^{k!-1}$, i.e., according to the Dirichlet distribution with all parameters equal to $1$, the aggregation property of the Dirichlet distribution implies that $\bbb=(\beta_{ij\ell})_{\ell=\max\{0,i+j-k\}}^{\min\{i,j\}}$ is distributed on $\sigma^N$ according to the Dirichlet distribution with parameters $(|L_{ij\ell}|)_{\ell=\max\{0,i+j-k\}}^{\min\{i,j\}}$, whose sum is $k!$.
We denote $\gamma_{ij\hell}=\beta_{ij\ell}$, so that $\ccc=(\gamma_{ij\hell})_{\hell=0}^N=\bbb$, and let $n_{\hell}=|L_{ij\ell}|$, $\hell=0,1,\ldots,N$.
Then
\begin{equation*}
    Y_k(u,v) =\sum_{\hell=0}^N \gamma_{ij\hell} \cdot \frac{\hell+\max\{0,i+j-k\}}{k}=W(u,v)+\sum_{\hell=1}^N \gamma_{ij\hell} \cdot \frac{\hell}{k},
\end{equation*}
since $\frac{\max\{0,i+j-k\}}{k}=\max\{0,\frac{i}{k}+\frac{j}{k}-1\}=W(u,v)$ and $\sum_{\hell=0}^N \gamma_{ij\hell}=1$.
Hence,
\begin{equation*}
    \PP[Y_k(u,v) \le w] =\PP\big[\sum_{\hell=0}^N \gamma_{ij\hell} \cdot \frac{\hell}{k} \le w-W(u,v)\big]=\PP\big[\sum_{\hell=1}^N \hell \cdot \gamma_{ij\hell} \le k(w-W(u,v))\big].
\end{equation*}
Taking into account that $\ccc$ is distributed according to the Dirichlet distribution with parameters $(n_{\hell})_{\hell=0}^N$ and using its probability density function (see equation~\eqref{eq:density}), we obtain
\begin{equation*}
    \PP[Y_k(u,v) \le w] =\frac{(k!-1)!}{\prod_{\hell=0}^N (n_{\hell}-1)!} \int_{\substack{t_1,t_2,\ldots,t_N \ge 0\hspace{0.67cm}\\t_1+t_2+\ldots+t_N \le 1 \hspace{1.11cm}\\ 1t_1+2t_2+\ldots+Nt_N \le k(w-W(u,v))}} t_0^{n_{0}-1} t_1^{n_{1}-1}\cdots t_N^{n_{N}-1} dt_1 \ldots dt_N,
\end{equation*}
where $t_0=1-t_1-t_2-\ldots-t_N$. This finishes the proof.
\end{proof}

Note that the domain of integration is the simplex $\sigma_*^N$ intersected by the half-space $1t_1+2t_2+\ldots+Nt_N \le k(w-W(u,v))$. This implies that the distribution function of $Y_k(u,v)$ is obtained by integrating the density of a Dirichlet distribution over a truncated simplex, where the truncation depends on the point $(u,v)$. We remark that $N$ and all $n_{\hell}$ also depend on $(u,v)$.

\medskip

We demonstrate the above with an example.

\begin{example} Let $k=4$ and $u=v=\frac12$, so that $i=j=2$. Then $N=2$ and $(n_0,n_1,n_2)=(|L_{220}|,|L_{221}|,|L_{222}|)=(4,16,4)$. The distribution function of the random variable $Y_4(\frac12,\frac12)$ equals
\begin{align*}
    \PP[Y_4(\tfrac12,\tfrac12) &\le w] =\frac{23!}{3! \cdot 15! \cdot 3!} \int_{\substack{\hspace{0.06cm} t_1, t_2 \ge 0\\ t_1+t_2 \le 1 \hspace{0.08cm}\\ t_1+2t_2\le 4w}} (1-t_1-t_2)^{3} t_1^{15} t_2^{3} ~dt_1 dt_2\\
    &=\begin{cases}
        0, & w<0\\
        \tfrac{1}{16} (4 w)^{20} (-72352 w^3+62744 w^2-18216 w+1771), & 0 \le w<\tfrac14,\\
        1-\tfrac{1}{16} (2-4w)^{20} (72352 w^3-45784 w^2+9736 w-695), & \tfrac14 \le w<\tfrac12,\\
        1, & \tfrac12 \le w,
    \end{cases}
\end{align*}
and its density equals
$$p(w)=\begin{cases}
    23(4w)^{19} (-18088 w^3+15004 w^2-4158 w+385), & 0 \le w<\tfrac14,\\
    23(2-4w)^{19} (18088 w^3-12128 w^2+2720 w-204), & \tfrac14 \le w<\tfrac12,\\
    0, & \text{otherwise}.
\end{cases}$$
Furthermore, $\EE[Y_4(\tfrac12,\tfrac12)]=\tfrac14$ and $\VV[Y_4(\tfrac12,\tfrac12)]=\tfrac{1}{1200}$.
Figure~\ref{fig:k4_graphs} depicts the graphs of the densities of the random variables $Y_4(u,v)$ for all $(u,v) \in \Delta_4$ with $u,v \notin \{0,1\}$.
Figure~\ref{fig:k4_density} depicts the heat-maps of the densities of variables $Y_4(u,v)$, $(u,v) \in \Delta_4$, on their respective ranges. The heat-maps are color-coded with respect to Figure~\ref{fig:k4_graphs}. Darker tone of the respective color represents higher density. The gray surface with the mesh lines represents the graph of the product copula.

\begin{figure}[ht!]
    \centering
    \includegraphics[width=\linewidth]{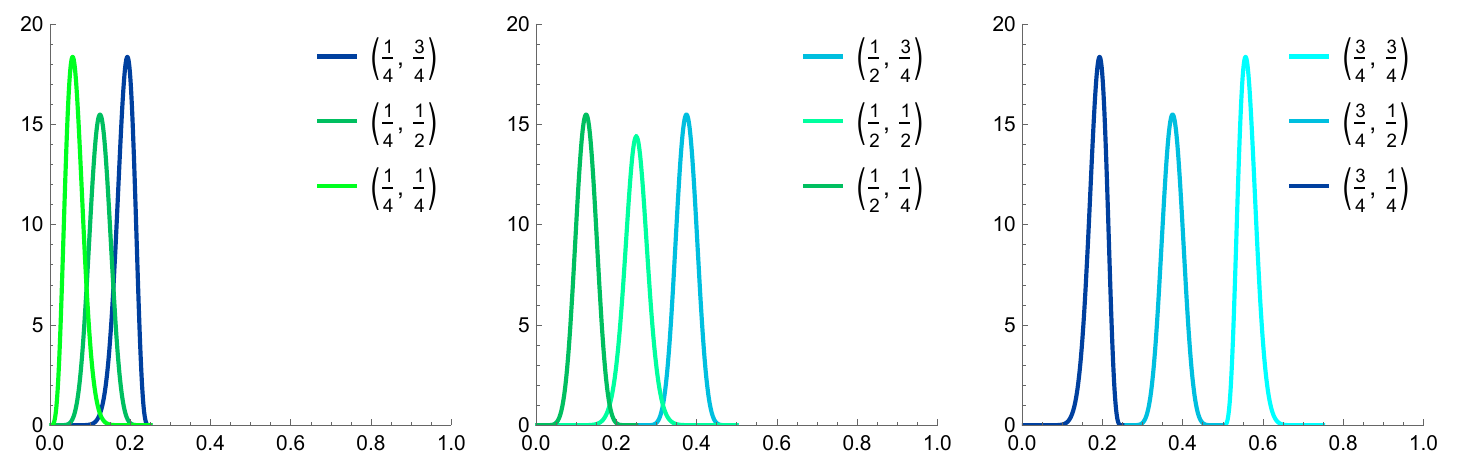}
    \caption{The densities of the random variables $Y_4(u,v)$ for all $(u,v) \in \Delta_4$ with $u,v \notin \{0,1\}$.}
    \label{fig:k4_graphs}
\end{figure}
\begin{figure}[ht!]
    \centering
    \includegraphics[width=0.5\linewidth]{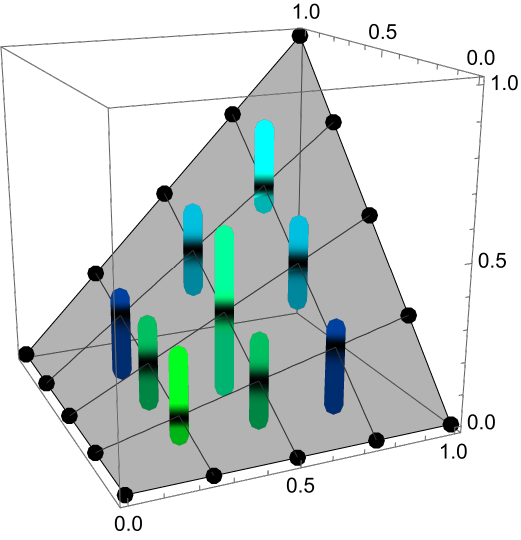}
    \caption{Heat-maps of the densities of random variables $Y_4(u,v)$ for all $(u,v) \in \Delta_4$.}
    \label{fig:k4_density}
\end{figure}

\end{example}

We remark that in practical applications the sampling of $Y_k$ is computationally demanding due to the need to sample $k!$ coefficients, which is feasible for small values of $k$ only. This issue will be addressed in our future work. We believe that the model could be modified so that the number $k!$ is brought down to $k^2$. Nevertheless, the present model is applicable for the analysis of small random samples.

\section{Bilinear extensions of random discrete copulas} \label{se:extension}

In practice, when the copula $C$ of a continuous random vector $(X,Y)$ is estimated, a finite data set is used. In particular, an empirically obtained sample $\{(x_i,y_i)\}_{i=1}^n$ of the random vector $(X,Y)$ is transformed into pseudo-observations $u_i=\tfrac{r_i}{n+1}$, $v_i=\tfrac{s_i}{n+1}$, where $r_i$ (respectively $s_i$) is the rank of observation $x_i$ (respectively $y_i$) among $x_1, \ldots, x_n$ (respectively $y_1,\ldots,y_n$).
This naturally gives rise to a discrete copula
$$\widetilde{C}(\tfrac{i}{n},\tfrac{j}{n})= \tfrac{1}{n} \sum_{\ell=1}^n \be_{ \{(u,v) \in \II^2 \, \mid \, u_\ell \leq u, \, v_\ell \leq v\}}(\tfrac{i}{n},\tfrac{j}{n}),$$
which is a discrete estimate for $C$ (see \cite[\S 5.6]{Nel06}), and is, in fact, a discrete copula induced by a permutation.
The bigger the data set is, the finer the domain of the discrete copula we get. General discrete copulas can be obtained using aggregation. In particular, we may take a divisor $k$ of the sample size $n$ and restrict $\widetilde{C}$ to $\DK$, see e.g. \cite{JuGrTr21}.

The so obtained discrete copula can then be extended to a continuous copula, which is an approximation of the original copula $C$. 
Often, the original copula is expected to be absolutely continuous. In this case, the simplest option is to use piecewise bilinear extensions (also called checkerboard extensions), see e.g. \cite{MikTay10,JuGrTr21}.
In fact, this corresponds to the choice to distribute the mass inside each square $[\tfrac{i-1}{n},\tfrac{i}{n}] \times [\tfrac{j-1}{n},\tfrac{j}{n}]$ (with known value but unknown distribution) uniformly on the square.
It was shown in \cite{LinWanZhaZha25} that this is an extension with the highest Shannon entropy, hence, it carries the least information about the random vector and is in this sense the least biased.
The use of checkerboard copulas for inference in copula models has also been discussed in \cite{GenNesRem14,GeNeRe17}.
Therefore, in this section we extend random discrete copulas $X_k$ and $Y_k$ to copulas defined on the whole $\II^2$ using bilinear functions.

For any $\pi \in \SK$, define $\displaystyle \Chat_\pi$ as a (piecewise) bilinear extension of the discrete copula  $C_\pi$ to the whole unit square $\II^2$, i.e., a checkerboard copula.  Denote with 
$$\PKhat=\{ \Chat_\pi \colon \II^2 \to \II \mid \pi \in \SK \}$$ 
the set of all such extensions. 

We choose $\Chat_\pi\in \PKhat$ uniformly at random. For every $(u,v) \in \II^2$ we denote by
$$\Xhat_k(u,v)=\Chat_\pi(u,v)$$
the value of the chosen element of $\PKhat$ at the point $(u,v)$. This makes $\Xhat_k(u,v)$ a discrete random variable with range in $\II$ and $\Xhat_k$ a random field over $\II^2$. If $(u,v) \in \DK$, then $\Xhat_k(u,v) = X_k(u,v)$, so we already know its distribution. We will now extend the results of Section~\ref{se:permutation} and determine the distribution, expected value and variance of $\Xhat_k(u,v)$ for all $(u,v) \in \II^2$.

\begin{theorem}\label{th:exk_hat}
    Let $k\ge 2$. For any $(u,v) \in \II^2$ let 
$$u = \tfrac{i}{k} + \tfrac{t}{k} \qquad \text{and} \qquad v = \tfrac{j}{k} + \tfrac{s}{k},$$
where $i=\lfloor uk \rfloor \in [k]_0$, $j=\lfloor vk \rfloor \in[k]_0$, $t = uk -i \in [0,1)$, and $s = vk -j \in [0,1)$. Then 
    $$ \EE[\Xhat_k(u,v)] = uv $$ and $$ \VV[\Xhat_k(u,v)] = \frac{1}{(k-1)}\Big(u(1-u)-\frac{t(1-t)}{k}\Big)\Big(v(1-v)-\frac{s(1-s)}{k}\Big).$$
\end{theorem}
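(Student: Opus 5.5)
Write $\Xhat_k(u,v)$ as a fixed linear combination of the four corner values:
$$\Xhat_k(u,v)=(1-t)(1-s)X_{00}+t(1-s)X_{10}+(1-t)sX_{01}+tsX_{11},$$
where $X_{ab}=X_k(\tfrac{i+a}{k},\tfrac{j+b}{k})$. The weights are deterministic, so linearity of expectation and Theorem~\ref{th:exk} give
$$\EE[\Xhat_k(u,v)]=\Big(\tfrac{i}{k}(1-t)+\tfrac{i+1}{k}t\Big)\Big(\tfrac{j}{k}(1-s)+\tfrac{j+1}{k}s\Big)=uv.$$
(If $i=k$ then $t=0$ and the out-of-range corners have weight zero, so nothing breaks.)

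For the variance I need the covariances $\Cov[X_k(u_1,v_1),X_k(u_2,v_2)]$ for mesh points. I will prove the formula
$$\Cov\big[X_k(\tfrac{i_1}{k},\tfrac{j_1}{k}),X_k(\tfrac{i_2}{k},\tfrac{j_2}{k})\big]=\frac{(\min\{i_1,i_2\}/k-i_1i_2/k^2)(\min\{j_1,j_2\}/k-j_1j_2/k^2)}{k-1},$$
which for equal points reduces to Theorem~\ref{th:exk}. To do so, write
$$kX_k(\tfrac{i}{k},\tfrac{j}{k})=\sum_{m\le i}\be[\pi(m)\le j].$$
The mixed moment $\EE[kX\cdot kX']$ is then a double sum over $m\le i_1$, $m'\le i_2$ of $\PP[\pi(m)\le j_1,\pi(m')\le j_2]$. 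This probability is $\min\{j_1,j_2\}/k$ when $m=m'$, and $\big(j_1j_2-\min\{j_1,j_2\}\big)/(k(k-1))$ when $m\ne m'$. There are $\min\{i_1,i_2\}$ diagonal pairs and $i_1i_2-\min\{i_1,i_2\}$ off-diagonal ones. Subtracting $i_1i_2j_1j_2/k^2$ and simplifying should factor into the product above. This step is elementary but is where the algebra must be checked carefully.

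Next, I observe that the covariance has the form $K(i_1,i_2)K(j_1,j_2)/(k-1)$ with $K(a,b)=\min\{a,b\}/k-ab/k^2$. The bilinear weights also factor as $w_a(t)w_b(s)$ with $w_0=1-t$ and $w_1=t$. So
$$\VV[\Xhat_k]=\sum_{a,a',b,b'}w_aw_{a'}w_bw_{b'}\Cov[X_{ab},X_{a'b'}]$$
splits as $\frac{1}{k-1}Q(t,i)Q(s,j)$, where
$$Q(t,i)=\sum_{a,a'\in\{0,1\}}w_a(t)w_{a'}(t)K(i+a,i+a').$$
It remains to evaluate $Q$. Write $K(a,b)=\min\{a,b\}/k-ab/k^2$. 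The product part gives $-\big((i+t)/k\big)^2=-u^2$. The min part gives
$$\tfrac1k\Big[(1-t)^2i+2t(1-t)i+t^2(i+1)\Big]=\tfrac{i+t^2}{k}=u-\tfrac{t-t^2}{k}.$$
Hence
$$Q=u(1-u)-\frac{t(1-t)}{k},$$
and the same computation in $s,j$ yields the claimed variance. I expect the main obstacle to be the covariance formula and verifying that it factors cleanly; once that is in place, the rest is bookkeeping.
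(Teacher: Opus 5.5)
Your proof is correct and complete. For the expectation it matches the paper; for the variance it takes a genuinely different route.

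The paper conditions on $X_k(\tfrac{i}{k},\tfrac{j}{k})=\tfrac{\ell}{k}$. It splits $L_{ij\ell}$ into five classes according to the positions of $\pi(i+1)$ and $\pi^{-1}(j+1)$, and counts each class to get the conditional joint law of the three neighbouring corner values. From this it derives only the covariances among the four corners, then expands the variance into ten terms and simplifies.

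You instead use the indicator representation $kX_k(\tfrac{i}{k},\tfrac{j}{k})=\sum_{m\le i}\be_{\{\pi(m)\le j\}}$ to get the covariance of any two mesh values at once. The algebra you flagged for checking does close: $k^2(k-1)\Cov[kX,kX']=(k\min\{i_1,i_2\}-i_1i_2)(k\min\{j_1,j_2\}-j_1j_2)$. Specialising this recovers each of the paper's neighbouring covariances.

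The kernel is separable: it is $\tfrac{1}{k-1}$ times the Brownian-pillow covariance $(\min\{u_1,u_2\}-u_1u_2)(\min\{v_1,v_2\}-v_1v_2)$. Since the bilinear weights are also a tensor product, the variance factors as $\tfrac{1}{k-1}Q(t,i)\,Q(s,j)$. This explains why the final formula is a product, and your short evaluation of $Q$ replaces the paper's ``plug in and simplify''.

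Your route also gives more. It yields $\Cov[\Xhat_k(u,v),\Xhat_k(u',v')]$ for arbitrary points, and through Lemma~\ref{lem:cov_YY} the full covariance kernels of $Y_k$ and $\Yhat_k$. In return, the paper's conditioning gives the exact discrete distribution of $\Xhat_k(u,v)$, which a second-moment computation cannot provide.

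For $i=k$ or $j=k$, the zero-weight remark you made for the expectation applies verbatim to the variance terms that involve undefined corners.
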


\begin{proof}
Since $\Chat_\pi$ is a bilinear extension of $C_\pi$, for every $(u,v) \in \II^2$ the value $\Chat_\pi(u,v)$ is a convex combination of values $C_\pi(\tfrac{i}{k},\tfrac{j}{k})$, $C_\pi(\tfrac{i+1}{k},\tfrac{j}{k})$, $C_\pi(\tfrac{i}{k},\tfrac{j+1}{k})$, and $C_\pi(\tfrac{i+1}{k},\tfrac{j+1}{k})$
$$\Chat_\pi(u,v) = (1-t)(1-s)C_\pi(\tfrac{i}{k},\tfrac{j}{k}) + t(1-s)C_\pi(\tfrac{i+1}{k},\tfrac{j}{k}) + (1-t)sC_\pi(\tfrac{i}{k},\tfrac{j+1}{k}) +tsC_\pi(\tfrac{i+1}{k},\tfrac{j+1}{k}).$$
This implies that
\begin{align*}
    \Xhat_k(u,v) &= (1-t)(1-s)X_k(\tfrac{i}{k},\tfrac{j}{k}) + t(1-s)X_k(\tfrac{i+1}{k},\tfrac{j}{k}) \\
    & \qquad + (1-t)sX_k(\tfrac{i}{k},\tfrac{j+1}{k}) +tsX_k(\tfrac{i+1}{k},\tfrac{j+1}{k}).
\end{align*}
It follows from Theorem~\ref{th:exk} that the expected value of $\Xhat_k(u,v)$ is equal to
$$ \EE\big[\Xhat_k(u,v)\big] = (1-t)(1-s)\tfrac{ij}{k^2} + t(1-s)\tfrac{(i+1)j}{k^2} + (1-t)s\tfrac{i(j+1)}{k^2} +ts\tfrac{(i+1)(j+1)}{k^2}. $$
A straightforward algebraic manipulation reveals that the above expression simplifies to
$$ \EE\big[\Xhat_k(u,v)\big] =  \tfrac{(i+t)(j+s)}{k^2} = uv. $$

In order to compute the variance of $\Xhat_k(u,v)$, we need to find conditional distribution of  $X_k(\tfrac{i+1}{k},\tfrac{j}{k})$, $X_k(\tfrac{i}{k},\tfrac{j+1}{k})$, and $X_k(\tfrac{i+1}{k},\tfrac{j+1}{k})$ under the condition that $X_k(\tfrac{i}{k},\tfrac{j}{k}) = \tfrac{\ell}{k}$, since these random variables are not independent. 

Suppose that $X_k(\tfrac{i}{k},\tfrac{j}{k}) = \tfrac{\ell}{k}$. We may assume without loss of generality that $i<k$ and $j<k$. Recall the notation 
$$L_{ij\ell} = \{\pi \in \SK \mid C_\pi(\tfrac{i}{k},\tfrac{j}{k}) = \tfrac{\ell}{k}\}. $$
We split this set into five subsets
\begin{align*}
   L_{ij\ell}^1 &= \{\pi \in L_{ij\ell} \mid \pi(i+1)\le j, \pi^{-1}(j+1)\le i\}, \\
   L_{ij\ell}^2 &= \{\pi \in L_{ij\ell} \mid \pi(i+1)\le j, \pi^{-1}(j+1)\ge i+2\}, \\
   L_{ij\ell}^3 &= \{\pi \in L_{ij\ell} \mid \pi(i+1)\ge j+2, \pi^{-1}(j+1)\le i\}, \\
   L_{ij\ell}^4 &= \{\pi \in L_{ij\ell} \mid \pi(i+1)\ge j+2, \pi^{-1}(j+1)\ge i+2\}, \\
   L_{ij\ell}^5 &= \{\pi \in L_{ij\ell} \mid \pi(i+1) = j+1\}. 
\end{align*}
It is obvious that these sets are disjoint and their union is $L_{ij\ell}$. If $\pi \in L_{ij\ell}^1$, then 
$$|\{m \in [k] \mid m\le i, \pi(m) \le j\}| = \ell,$$
thus
\begin{align*}
    |\{m \in [k] \mid m\le i+1, \pi(m) \le j\}| &= \ell+1, \\
    |\{m \in [k] \mid m\le i, \pi(m) \le j+1\}| &= \ell+1,  \text{ and}\\
    |\{m \in [k] \mid m\le i+1, \pi(m) \le j+1\}| &= \ell+2. \\
\end{align*} 
It follows that
\begin{equation} \label{eq:Lijl1}
    \pi \in L_{ij\ell}^1 \Leftrightarrow C_\pi(\tfrac{i+1}{k},\tfrac{j}{k}) = \tfrac{\ell+1}{k},  C_\pi(\tfrac{i}{k},\tfrac{j+1}{k}) = \tfrac{\ell+1}{k},  C_\pi(\tfrac{i+1}{k},\tfrac{j+1}{k}) = \tfrac{\ell+2}{k}.
\end{equation} 
Similarly, we obtain
\begin{align*}
   \pi \in L_{ij\ell}^2 &\Leftrightarrow C_\pi(\tfrac{i+1}{k},\tfrac{j}{k}) = \tfrac{\ell+1}{k},  C_\pi(\tfrac{i}{k},\tfrac{j+1}{k}) = \tfrac{\ell}{k},  C_\pi(\tfrac{i+1}{k},\tfrac{j+1}{k}) = \tfrac{\ell+1}{k}, \\
   \pi \in L_{ij\ell}^3 &\Leftrightarrow C_\pi(\tfrac{i+1}{k},\tfrac{j}{k}) = \tfrac{\ell}{k},  C_\pi(\tfrac{i}{k},\tfrac{j+1}{k}) = \tfrac{\ell+1}{k},  C_\pi(\tfrac{i+1}{k},\tfrac{j+1}{k}) = \tfrac{\ell+1}{k}, \\
   \pi \in L_{ij\ell}^4 &\Leftrightarrow C_\pi(\tfrac{i+1}{k},\tfrac{j}{k}) = \tfrac{\ell}{k},  C_\pi(\tfrac{i}{k},\tfrac{j+1}{k}) = \tfrac{\ell}{k},  C_\pi(\tfrac{i+1}{k},\tfrac{j+1}{k}) = \tfrac{\ell}{k}, \\
   \pi \in L_{ij\ell}^5 &\Leftrightarrow C_\pi(\tfrac{i+1}{k},\tfrac{j}{k}) = \tfrac{\ell}{k},  C_\pi(\tfrac{i}{k},\tfrac{j+1}{k}) = \tfrac{\ell}{k},  C_\pi(\tfrac{i+1}{k},\tfrac{j+1}{k}) = \tfrac{\ell+1}{k}.
\end{align*}
We now compute the cardinality of $ L_{ij\ell}^1$.
To obtain a permutation $\pi\in L_{ij\ell}^1$, we first choose a subset $A \subseteq [i]$ containing $\ell$ elements and a subset $B \subseteq [j]$ containing $\ell$ elements and choose any bijective mapping from $A$ to $B$. Next we choose an element $a \in [i] \setminus A$ and map $a$ to $j+1$. Then we choose a subset $C \subseteq [k]\setminus [j+1]$ containing $i-\ell-1$ elements and choose any bijective mapping from $[i]\setminus (A\cup\{a\})$ to $C$. Next we choose an element $b \in [j] \setminus B$ and map $i+1$ to $b$. Then we choose a subset $D \subseteq [k]\setminus [i+1]$ containing $j-\ell-1$ elements and choose any bijective mapping from $D$ to $[j]\setminus (B\cup\{b\})$. Finally, we choose any bijective mapping from $[k]\setminus([i+1]\cup D)$ to $[k]\setminus([j+1]\cup C)$. Counting these choices, it follows that 
\begin{align*}
   |L_{ij\ell}^1| &= \binom{i}{\ell} \, \binom{j}{\ell} \, \ell! \, (i-\ell) \, \binom{k-j-1}{i-\ell-1} \, (i-\ell-1)! \, (j-\ell) \, \binom{k-i-1}{j-\ell-1} \, (j-\ell-1)! \,(k+\ell-i-j)! \\ 
   &= \frac{i! \,j! \, \ell!\, (i-\ell) \, (k-j-1)! \,(i-\ell-1)! \,(j-\ell) \, (k-i-1)! \,(j-\ell-1)! \, (k+\ell-i-j)!}{\ell! \, (i-\ell)! \, \ell! \, (j-\ell) \, (i-\ell-1)! \, (k+\ell-i-j)! \, (j-\ell-1)! \, (k+\ell-i-j)!} \\
   &= \frac{i! \, j! \, (k-i-1)! \, (k-j-1)!}{(i-\ell-1)! \, (j-\ell-1)! \, (k+\ell-i-j)! \, \ell!}. 
\end{align*}
We compute the cardinalities of the remaining sets in a similar way and obtain
\begin{align*}
   |L_{ij\ell}^2| &= \binom{i}{\ell} \, \binom{j}{\ell} \, \ell! \, \binom{k-j-1}{i-\ell} \, (i-\ell)! \, (j-\ell) \, \binom{k-i-1}{j-\ell-1} \, (j-\ell-1)! \,(k+\ell-i-j)! \\ 
   &= \frac{i! \, j! \, (k-i-1)! \, (k-j-1)!}{(i-\ell)! \, (j-\ell-1)! \, (k+\ell-i-j-1)! \, \ell!}, \\ 
   |L_{ij\ell}^3| &= \binom{i}{\ell} \, \binom{j}{\ell} \, \ell! \, (i-\ell) \, \binom{k-j-1}{i-\ell-1} \, (i-\ell-1)! \, \binom{k-i-1}{j-\ell} \, (j-\ell)! \,(k+\ell-i-j)! \\ 
   &= \frac{i! \, j! \, (k-i-1)! \, (k-j-1)!}{(i-\ell-1)! \, (j-\ell)! \, (k+\ell-i-j-1)! \, \ell!}, \\ 
   |L_{ij\ell}^4| &= \binom{i}{\ell} \, \binom{j}{\ell} \, \ell! \, \binom{k-j-1}{i-\ell} \, (i-\ell)! \, \binom{k-i-1}{j-\ell} \, (j-\ell)! \,(k+\ell-i-j-1)^2 \,(k+\ell-i-j-2)! \\ 
   &= \frac{i! \, j! \, (k-i-1)! \, (k-j-1)!}{(i-\ell)! \, (j-\ell)! \, (k+\ell-i-j-2)! \, \ell!}, \\ 
   |L_{ij\ell}^5| &= \binom{i}{\ell} \, \binom{j}{\ell} \, \ell! \, \binom{k-j-1}{i-\ell} \, (i-\ell)! \, \binom{k-i-1}{j-\ell} \, (j-\ell)! \,(k+\ell-i-j-1)! \\ 
   &= \frac{i! \, j! \, (k-i-1)! \, (k-j-1)!}{(i-\ell)! \, (j-\ell)! \, (k+\ell-i-j-1)! \, \ell!}.
\end{align*}
Using characterization \eqref{eq:Lijl1} and equation~\eqref{eq:L_ijl_size}, we compute the conditional probability
\begin{align*}
   p_{ij\ell}^1 &:=\PP\big[X_k(\tfrac{i+1}{k},\tfrac{j}{k}) = \tfrac{\ell+1}{k},  X_k(\tfrac{i}{k},\tfrac{j+1}{k}) = \tfrac{\ell+1}{k},  X_k(\tfrac{i+1}{k},\tfrac{j+1}{k}) = \tfrac{\ell+2}{k} \big| X_k(\tfrac{i}{k},\tfrac{j}{k}) = \tfrac{\ell}{k}\big] \\
   & \phantom{:}= \frac{|L_{ij\ell}^1|}{|L_{ij\ell}|} = \frac{\frac{i! \, j! \, (k-i-1)! \, (k-j-1)!}{(i-\ell-1)! \, (j-\ell-1)! \, (k+\ell-i-j)! \, \ell!}}{\frac{i! \, j! \, (k-i)! \, (k-j)!}{(i-\ell)! \, (j-\ell)! \, (k+\ell-i-j)! \, \ell!}} =\  \frac{(i-\ell) \, (j-\ell)}{(k-i) \, (k-j)}.
\end{align*}
Similarly,
\begin{align*}
   p_{ij\ell}^2 &:=\PP\big[X_k(\tfrac{i+1}{k},\tfrac{j}{k}) = \tfrac{\ell+1}{k},  X_k(\tfrac{i}{k},\tfrac{j+1}{k}) = \tfrac{\ell}{k}, X_k(\tfrac{i+1}{k},\tfrac{j+1}{k}) = \tfrac{\ell+1}{k} \big| X_k(\tfrac{i}{k},\tfrac{j}{k}) = \tfrac{\ell}{k}\big] \\
   & \phantom{:}= \frac{|L_{ij\ell}^2|}{|L_{ij\ell}|} = \frac{(j-\ell) \, (k+\ell-i-j)}{(k-i) \, (k-j)} , \\
   p_{ij\ell}^3 &:=\PP\big[X_k(\tfrac{i+1}{k},\tfrac{j}{k}) = \tfrac{\ell}{k},  X_k(\tfrac{i}{k},\tfrac{j+1}{k}) = \tfrac{\ell+1}{k},  X_k(\tfrac{i+1}{k},\tfrac{j+1}{k}) = \tfrac{\ell+1}{k} \big| X_k(\tfrac{i}{k},\tfrac{j}{k}) = \tfrac{\ell}{k}\big] \\
   & \phantom{:}= \frac{|L_{ij\ell}^3|}{|L_{ij\ell}|} = \frac{(i-\ell) \, (k+\ell-i-j)}{(k-i) \, (k-j)} , \\
   p_{ij\ell}^4 &:=\PP\big[X_k(\tfrac{i+1}{k},\tfrac{j}{k}) = \tfrac{\ell}{k},  X_k(\tfrac{i}{k},\tfrac{j+1}{k}) = \tfrac{\ell}{k},  X_k(\tfrac{i+1}{k},\tfrac{j+1}{k}) = \tfrac{\ell}{k} \big| X_k(\tfrac{i}{k},\tfrac{j}{k}) = \tfrac{\ell}{k}\big] \\
   & \phantom{:}= \frac{|L_{ij\ell}^4|}{|L_{ij\ell}|} = \frac{(k+\ell-i-j) \, (k+\ell-i-j-1)}{(k-i) \, (k-j)} , \\
   p_{ij\ell}^5 &:=\PP\big[X_k(\tfrac{i+1}{k},\tfrac{j}{k}) = \tfrac{\ell}{k},  X_k(\tfrac{i}{k},\tfrac{j+1}{k}) = \tfrac{\ell}{k},  X_k(\tfrac{i+1}{k},\tfrac{j+1}{k}) = \tfrac{\ell+1}{k} \big| X_k(\tfrac{i}{k},\tfrac{j}{k}) = \tfrac{\ell}{k}\big] \\
   & \phantom{:}= \frac{|L_{ij\ell}^5|}{|L_{ij\ell}|} = \frac{(k+\ell-i-j)}{(k-i) \, (k-j)} .
\end{align*}
If $\pi \in L_{ij\ell}^1$, condition \eqref{eq:Lijl1} implies that $\Chat_\pi(u,v) =  \frac{\ell+t+s}{k}$. If $\pi \in L_{ij\ell}^2$, we have $\Chat_\pi(u,v) =  \frac{\ell+t}{k}$, if $\pi \in L_{ij\ell}^3$, we obtain $\Chat_\pi(u,v) =  \frac{\ell+s}{k}$, if $\pi \in L_{ij\ell}^4$, we have $\Chat_\pi(u,v) =  \frac{\ell}{k}$, and, finally, if $\pi \in L_{ij\ell}^5$, we get $\Chat_\pi(u,v) =  \frac{\ell+ts}{k}$.
So, the conditional distribution of random variable $\Xhat_k(u,v)$ under the condition that $X_k(\tfrac{i}{k},\tfrac{j}{k}) = \tfrac{\ell}{k}$ is
$$\big(\Xhat_k(u,v) \big| X_k(\tfrac{i}{k},\tfrac{j}{k}) = \tfrac{\ell}{k}\big) \sim  \begin{pmatrix}
    \frac{\ell+t+s}{k} & \frac{\ell+t}{k} & \frac{\ell+s}{k} & \frac{\ell}{k} & \frac{\ell+ts}{k} \\[2mm]
    p_{ij\ell}^1 & p_{ij\ell}^2 & p_{ij\ell}^3 & p_{ij\ell}^4 & p_{ij\ell}^5
\end{pmatrix}.$$
We remark that in this formula some of the values may collapse into a single one. For example, when $s=t$, we have $\frac{\ell+t}{k}=\frac{\ell+s}{k}$, and this value is attained with probability $p_{ij\ell}^2+p_{ij\ell}^3$.
We denote the sets
$$M_{st\ell} = \{\tfrac{\ell+t+s}{k}, \tfrac{\ell+t}{k}, \tfrac{\ell+s}{k}, \tfrac{\ell}{k}, \tfrac{\ell+ts}{k}\} \qquad \text{and} \qquad M_{uv} = \bigcup_{\ell=\max \{0,  i+j-k \}}^{\min \{i,j \}} M_{st\ell}.$$
We have proved that $\Xhat_k(u,v)$ is a discrete random variable with range in $M_{uv}$. Note that in the case $t > 0$, $s > 0$, $t\ne s$, and $t+s \ne 1$ each set $M_{st\ell}$ contains five different elements and the sets $M_{st\ell}$ are pairwise disjoint, so that 
\begin{align*}
    \PP\big[\Xhat_k(u,v) = \tfrac{\ell+t+s}{k}\big] &= p_{ij\ell}^1\, \PP[X_k(\tfrac{i}{k},\tfrac{j}{k}) = \tfrac{\ell}{k}], \\
    \PP\big[\Xhat_k(u,v) = \tfrac{\ell+t}{k}\big] &= p_{ij\ell}^2\, \PP[X_k(\tfrac{i}{k},\tfrac{j}{k}) = \tfrac{\ell}{k}], \\
    \PP\big[\Xhat_k(u,v) = \tfrac{\ell+s}{k}\big] &= p_{ij\ell}^3\, \PP[X_k(\tfrac{i}{k},\tfrac{j}{k}) = \tfrac{\ell}{k}], \\
    \PP\big[\Xhat_k(u,v) = \tfrac{\ell}{k}\big] &= p_{ij\ell}^4\, \PP[X_k(\tfrac{i}{k},\tfrac{j}{k}) = \tfrac{\ell}{k}], \text{ and}\\
    \PP\big[\Xhat_k(u,v) = \tfrac{\ell+ts}{k}\big] &= p_{ij\ell}^5\, \PP[X_k(\tfrac{i}{k},\tfrac{j}{k}) = \tfrac{\ell}{k}]. 
\end{align*}
If $0 < t = s \ne \tfrac12$ each set $M_{st\ell}$ contains four different elements and the sets $M_{st\ell}$ are still pairwise disjoint, so that in this case
$$\PP\big[\Xhat_k(u,v) = \tfrac{\ell+t}{k}\big] = (p_{ij\ell}^2 + p_{ij\ell}^3)\, \PP[X_k(\tfrac{i}{k},\tfrac{j}{k}) = \tfrac{\ell}{k}],$$
and the other probabilities are the same as above. If $t > 0, s > 0$, $t\ne s$, and $t+s=1$ the sets are not disjoint, namely $M_{st\ell} \cap M_{st(\ell+1)} = \{\tfrac{\ell+1}{k}\}$, so that in this case
$$\PP\big[\Xhat_k(u,v) = \tfrac{\ell+1}{k}\big] = p_{ij\ell}^1\, \PP[X_k(\tfrac{i}{k},\tfrac{j}{k}) = \tfrac{\ell}{k}] + p_{ij(\ell+1)}^4\, \PP[X_k(\tfrac{i}{k},\tfrac{j}{k}) = \tfrac{\ell+1}{k}],$$
and the other probabilities are the same as above. If $t=0$ or $s=0$ or $t=s=\tfrac12$ even more possible values of $\Xhat_k(u,v)$ collapse into a single value. 

Furthermore, conditional probabilities of the values of $X_k(\tfrac{i+1}{k},\tfrac{j}{k})$, $X_k(\tfrac{i}{k},\tfrac{j+1}{k})$, and $X_k(\tfrac{i+1}{k},\tfrac{j+1}{k})$ under the condition that $X_k(\tfrac{i}{k},\tfrac{j}{k})  = \tfrac{\ell}{k}$ are
\begin{align*}
   \PP\big[X_k(\tfrac{i+1}{k},\tfrac{j}{k}) = \tfrac{\ell}{k} \big| X_k(\tfrac{i}{k},\tfrac{j}{k}) = \tfrac{\ell}{k}\big] 
   &= p_{ij\ell}^3 + p_{ij\ell}^4 + p_{ij\ell}^5 = \frac{k+\ell-i-j}{k-i}, \\
   \PP\big[X_k(\tfrac{i+1}{k},\tfrac{j}{k}) = \tfrac{\ell+1}{k} \big| X_k(\tfrac{i}{k},\tfrac{j}{k}) = \tfrac{\ell}{k}\big] 
   &= p_{ij\ell}^1 + p_{ij\ell}^2 = \frac{j-\ell}{k-i}, \\
   \PP\big[X_k(\tfrac{i}{k},\tfrac{j+1}{k}) = \tfrac{\ell}{k} \big| X_k(\tfrac{i}{k},\tfrac{j}{k}) = \tfrac{\ell}{k}\big] 
   &= p_{ij\ell}^2 + p_{ij\ell}^4 + p_{ij\ell}^5 = \frac{k+\ell-i-j}{k-j}, \\
   \PP\big[X_k(\tfrac{i}{k},\tfrac{j+1}{k}) = \tfrac{\ell+1}{k} \big| X_k(\tfrac{i}{k},\tfrac{j}{k}) = \tfrac{\ell}{k}\big] 
   &= p_{ij\ell}^1 + p_{ij\ell}^3 = \frac{i-\ell}{k-j}, \\
   \PP\big[X_k(\tfrac{i+1}{k},\tfrac{j+1}{k}) = \tfrac{\ell}{k} \big| X_k(\tfrac{i}{k},\tfrac{j}{k}) = \tfrac{\ell}{k}\big] 
   &= p_{ij\ell}^4 = \frac{(k+\ell-i-j) \, (k+\ell-i-j-1)}{(k-i) \, (k-j)}, \\
   \PP\big[X_k(\tfrac{i+1}{k},\tfrac{j+1}{k}) = \tfrac{\ell+1}{k} \big| X_k(\tfrac{i}{k},\tfrac{j}{k}) = \tfrac{\ell}{k}\big] 
   &= p_{ij\ell}^2 + p_{ij\ell}^3 + p_{ij\ell}^5 = \frac{(k+\ell-i-j) \, (i+j+1-2\ell)}{(k-i) \, (k-j)}, \\
   \PP\big[X_k(\tfrac{i+1}{k},\tfrac{j+1}{k}) = \tfrac{\ell+2}{k} \big| X_k(\tfrac{i}{k},\tfrac{j}{k}) = \tfrac{\ell}{k}\big] 
   &= p_{ij\ell}^1 = \frac{(i-\ell) \, (j-\ell)}{(k-i) \, (k-j)}.
\end{align*}

We are now able to compute the covariances between random variables $X_k(\tfrac{i}{k},\tfrac{j}{k})$, $X_k(\tfrac{i+1}{k},\tfrac{j}{k})$, $X_k(\tfrac{i}{k},\tfrac{j+1}{k})$, and $X_k(\tfrac{i+1}{k},\tfrac{j+1}{k})$.
We have
\begin{align*}
    \EE\big[X_k(\tfrac{i}{k},\tfrac{j}{k})X_k(\tfrac{i+1}{k},\tfrac{j}{k})\big] &= \sum_{\ell=\max \{0,  i+j-k \}}^{\min \{i,j \}} \left(\frac{\ell^2}{k^2}\cdot \PP\big[X_k(\tfrac{i}{k},\tfrac{j}{k}) = \tfrac{\ell}{k}, X_k(\tfrac{i+1}{k},\tfrac{j}{k}) = \tfrac{\ell}{k}\big] \right.\\
    & \left.\qquad + \frac{\ell(\ell+1)}{k^2}\cdot \PP\big[X_k(\tfrac{i}{k},\tfrac{j}{k}) = \tfrac{\ell}{k}, X_k(\tfrac{i+1}{k},\tfrac{j}{k})= \tfrac{\ell+1}{k}\big]\right)\\
    &= \sum_{\ell=\max \{0,  i+j-k \}}^{\min \{i,j \}} \frac{\ell^2}{k^2}\cdot \PP\big[X_k(\tfrac{i}{k},\tfrac{j}{k}) = \tfrac{\ell}{k}\big] \\
    & \qquad + \sum_{\ell=\max \{0,  i+j-k \}}^{\min \{i,j \}} \frac{\ell}{k^2}\cdot \PP\big[X_k(\tfrac{i}{k},\tfrac{j}{k}) = \tfrac{\ell}{k}, X_k(\tfrac{i+1}{k},\tfrac{j}{k})= \tfrac{\ell+1}{k}\big].
\end{align*}
The first sum is equal to $\EE\big[X_k(\tfrac{i}{k},\tfrac{j}{k})^2\big]$, while in the second sum we use the fact that the probability is equal to $(p_{ij\ell}^1 + p_{ij\ell}^2)\PP\big[X_k(\tfrac{i}{k},\tfrac{j}{k}) = \tfrac{\ell}{k}\big]$.
Thus, we obtain
\begin{align*}
    \EE\big[X_k(\tfrac{i}{k},\tfrac{j}{k})X_k(\tfrac{i+1}{k},\tfrac{j}{k})\big] &= \EE\big[X_k(\tfrac{i}{k},\tfrac{j}{k})^2\big] + \sum_{\ell=\max \{0,  i+j-k \}}^{\min \{i,j \}} \frac{\ell}{k^2}\cdot \frac{j-\ell}{k-i}\cdot \PP\big[X_k(\tfrac{i}{k},\tfrac{j}{k}) = \tfrac{\ell}{k}\big]\\
    &= \EE\big[X_k(\tfrac{i}{k},\tfrac{j}{k})^2\big] + \frac{j}{k(k-i)}\sum_{\ell=\max \{0,  i+j-k \}}^{\min \{i,j \}} \frac{\ell}{k}\cdot \PP\big[X_k(\tfrac{i}{k},\tfrac{j}{k}) = \tfrac{\ell}{k}\big] \\
    & \qquad - \frac{1}{k-i}\sum_{\ell=\max \{0,  i+j-k \}}^{\min \{i,j \}} \frac{\ell^2}{k^2}\cdot \PP\big[X_k(\tfrac{i}{k},\tfrac{j}{k}) = \tfrac{\ell}{k}\big].
\end{align*}
Again, the first sum is equal to $\EE\big[X_k(\tfrac{i}{k},\tfrac{j}{k})\big]$, and the second to $\EE\big[X_k(\tfrac{i}{k},\tfrac{j}{k})^2\big]$, so by~\eqref{eq:E(X_k^2)} we have
\begin{align*}
    \EE\big[X_k(\tfrac{i}{k},\tfrac{j}{k})X_k(\tfrac{i+1}{k},\tfrac{j}{k})\big] &= \EE\big[X_k(\tfrac{i}{k},\tfrac{j}{k})^2\big] + \frac{j}{k(k-i)}\EE\big[X_k(\tfrac{i}{k},\tfrac{j}{k})\big]
    - \frac{1}{k-i}\EE\big[X_k(\tfrac{i}{k},\tfrac{j}{k})^2\big] \\
    &=  \frac{k-i-1}{k-i} \cdot \left(\frac{ij}{k^3} + \frac{i(i-1)j(j-1)}{k^3 (k-1)}\right) + \frac{j}{k(k-i)}\cdot \frac{ij}{k^2} \\
    &=  \frac{ij(k-i-1+ij)}{k^3(k-1)}.
\end{align*}
This implies that
\begin{align*}
    \Cov[X_k(\tfrac{i}{k},\tfrac{j}{k}), X_k(\tfrac{i+1}{k},\tfrac{j}{k})] &= \EE\big[X_k(\tfrac{i}{k},\tfrac{j}{k})X_k(\tfrac{i+1}{k},\tfrac{j}{k})\big] - \EE[X_k(\tfrac{i}{k},\tfrac{j}{k})]\EE[X_k(\tfrac{i+1}{k},\tfrac{j}{k})] \\
    &=  \frac{ij(k-i-1+ij)}{k^3(k-1)} - \frac{ij}{k^2}\cdot\frac{(i+1)j}{k^2} \\
    &=  \frac{ij(k-i-1)(k-j)}{k^4(k-1)}. 
\end{align*}
Similarly, we obtain
$$\Cov[X_k(\tfrac{i}{k},\tfrac{j}{k}), X_k(\tfrac{i}{k},\tfrac{j+1}{k})] = \frac{ij(k-i)(k-j-1)}{k^4(k-1)}, $$
$$\Cov[X_k(\tfrac{i}{k},\tfrac{j}{k}), X_k(\tfrac{i+1}{k},\tfrac{j+1}{k})] = \frac{ij(k-i-1)(k-j-1)}{k^4(k-1)}, $$
and
$$\Cov[X_k(\tfrac{i+1}{k},\tfrac{j}{k}), X_k(\tfrac{i}{k},\tfrac{j+1}{k})] = \frac{ij(k-i-1)(k-j-1)}{k^4(k-1)}. $$
Finally, expanding the variance of $\Xhat_k(u,v)$, we obtain
\begin{align*}
    \VV\big[\Xhat_k(u,v)\big] &= (1-t)^2(1-s)^2\cdot\VV[X_k(\tfrac{i}{k},\tfrac{j}{k})] + t^2(1-s)^2\cdot\VV[X_k(\tfrac{i+1}{k},\tfrac{j}{k})] \\
    & \qquad + (1-t)^2s^2\cdot\VV[X_k(\tfrac{i}{k},\tfrac{j+1}{k})]+t^2s^2 \cdot\VV[X_k(\tfrac{i+1}{k},\tfrac{j+1}{k})] \\
    & \qquad + 2(1-t)t(1-s)^2\cdot\Cov[X_k(\tfrac{i}{k},\tfrac{j}{k}), X_k(\tfrac{i+1}{k},\tfrac{j}{k})]  \\
    & \qquad + 2(1-t)ts^2\cdot\Cov[X_k(\tfrac{i}{k},\tfrac{j+1}{k}), X_k(\tfrac{i+1}{k},\tfrac{j+1}{k})] \\
    & \qquad + 2(1-t)^2(1-s)s\cdot\Cov[X_k(\tfrac{i}{k},\tfrac{j}{k}), X_k(\tfrac{i}{k},\tfrac{j+1}{k})]  \\
    & \qquad + 2t^2(1-s)s\cdot\Cov[X_k(\tfrac{i+1}{k},\tfrac{j}{k}), X_k(\tfrac{i+1}{k},\tfrac{j+1}{k})] \\
    & \qquad + 2(1-t)t(1-s)s\cdot\Cov[X_k(\tfrac{i}{k},\tfrac{j}{k}), X_k(\tfrac{i+1}{k},\tfrac{j+1}{k})]  \\
    & \qquad + 2(1-t)t(1-s)s\cdot\Cov[X_k(\tfrac{i+1}{k},\tfrac{j}{k}), X_k(\tfrac{i}{k},\tfrac{j+1}{k})].
\end{align*}
Plugging in the expressions for the (co-) variances and simplifying, we get
\begin{align*}
    \VV\big[\Xhat_k(u,v)\big]
    &=\frac{\big((i+t)(k-i-t)-kt(1-t)\big)\big((j+s)(k-j-s)-ks(1-s)\big)}{k^4(k-1)} \\
    &= \frac{1}{(k-1)}\Big(u(1-u)-\frac{t(1-t)}{k}\Big)\Big(v(1-v)-\frac{s(1-s)}{k}\Big),
\end{align*}
which finishes the proof.
\end{proof}

In a similar manner as for random discrete copula $X_k$, we extend random discrete copula $Y_k$ to its bilinear extension $\Yhat_k$, which is defined for all $(u,v) \in \II^2$.
Recall that for $(u,v) \in \DK$ we have $Y_k(u,v)= \sum_{\pi \in \SK} \alpha_\pi \, C_{\pi}(u,v)$, where $k!$-tuple $\aaa=(\alpha_\pi)_{\pi \in \SK}$ is distributed uniformly over the simplex $\sigma^{k!-1}$.
Let $$\Yhat_k(u,v)= \sum_{\pi \in \SK} \alpha_{\pi} \, \Chat_{\pi}(u,v)$$
for all $(u,v) \in \II^2$.
Recall also the notation $u = \tfrac{i}{k} + \tfrac{t}{k}$ and $v = \tfrac{j}{k} + \tfrac{s}{k}$,
where $i=\lfloor uk \rfloor \in [k]_0$, $j=\lfloor vk \rfloor \in [k]_0$, $t = uk -i \in [0,1)$, and $s = vk -j \in [0,1)$. It follows that
$$\Yhat_k(u,v) = (1-t)(1-s)Y_k(\tfrac{i}{k},\tfrac{j}{k}) + t(1-s)Y_k(\tfrac{i+1}{k},\tfrac{j}{k}) + (1-t)sY_k(\tfrac{i}{k},\tfrac{j+1}{k}) +tsY_k(\tfrac{i+1}{k},\tfrac{j+1}{k}).$$
In our last theorem we compute the expected value and variance of random variable $\Yhat_k(u,v)$.

\begin{theorem}\label{th:eyk_hat}
    Let $k\ge 2$. For any $(u,v) \in \II^2$ we have 
    \begin{align*}
    \EE[\Yhat_k(u,v)] &=\EE[\Xhat_k(u,v)]= uv \qquad\text{and}\\
    \VV[\Yhat_k(u,v)] &=\frac{1}{k!+1}\VV[\Xhat_k(u,v)] = \frac{1}{(k!+1)(k-1)}\Big(u(1-u)-\frac{t(1-t)}{k}\Big)\Big(v(1-v)-\frac{s(1-s)}{k}\Big).
    \end{align*}
\end{theorem}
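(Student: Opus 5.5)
We reduce both claims to the corresponding statements for $\Xhat_k$ (Theorem~\ref{th:exk_hat}) by the same two devices used for $Y_k$ in Theorem~\ref{th:eyk}: linearity of expectation in the coefficients $\alpha_\pi$, and the covariance identity of Lemma~\ref{lem:cov_YY}.

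For the expected value, we write $\Yhat_k(u,v)=\sum_{\pi\in\SK}\alpha_\pi\Chat_\pi(u,v)$. Using $\EE[\alpha_\pi]=\frac1{k!}$ from \eqref{eq:EVC_uni}, we get $\EE[\Yhat_k(u,v)]=\frac1{k!}\sum_{\pi}\Chat_\pi(u,v)$. Since $\Chat_\pi$ is chosen uniformly from $\PKhat$ in the definition of $\Xhat_k$, this average is exactly $\EE[\Xhat_k(u,v)]$, which equals $uv$ by Theorem~\ref{th:exk_hat}.

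For the variance, the cleanest route is to repeat the computation of Lemma~\ref{lem:cov_YY} verbatim with $C_\pi$ replaced by $\Chat_\pi$. That proof used only the Dirichlet moments \eqref{eq:EVC_uni} and the fact that $X_k$ is obtained by choosing $\pi$ uniformly, so it gives
$$\Cov[\Yhat_k(u,v),\Yhat_k(u',v')]=\frac1{k!+1}\Cov[\Xhat_k(u,v),\Xhat_k(u',v')]$$
for all $(u,v),(u',v')\in\II^2$. Taking $(u',v')=(u,v)$ yields $\VV[\Yhat_k(u,v)]=\frac1{k!+1}\VV[\Xhat_k(u,v)]$, and inserting the formula from Theorem~\ref{th:exk_hat} gives the stated closed form.

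As a cross-check, we could instead expand $\Yhat_k(u,v)$ as the bilinear combination of the four values $Y_k$ at the corners of the cell. We would then apply Lemma~\ref{lem:cov_YY} to each of the ten variance and covariance terms: every term is scaled by $\frac1{k!+1}$, and the resulting sum is exactly the expansion of $\VV[\Xhat_k(u,v)]$ from the proof of Theorem~\ref{th:exk_hat}. There is no real obstacle here. The only point needing care is confirming that the proof of Lemma~\ref{lem:cov_YY} never used discreteness of the evaluation points, only linearity in $\aaa$, so that it transfers to arbitrary points of $\II^2$.
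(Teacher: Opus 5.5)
Your proposal is correct. The expected-value part is exactly the paper's argument. For the variance you take a slightly different route.

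The paper does not generalize Lemma~\ref{lem:cov_YY}. It writes $\Yhat_k(u,v)$ as the bilinear combination of the four corner values $Y_k(\tfrac{i}{k},\tfrac{j}{k}),\ldots,Y_k(\tfrac{i+1}{k},\tfrac{j+1}{k})$. It then applies Lemma~\ref{lem:cov_YY}, exactly as stated on the mesh $\DK$, to the variance and covariance terms. Finally it observes that the result is $\frac{1}{k!+1}$ times the expansion of $\VV[\Xhat_k(u,v)]$ from the proof of Theorem~\ref{th:exk_hat}. That is precisely your ``cross-check''.

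Your main route instead notes that the computation in Lemma~\ref{lem:cov_YY} is an identity valid for any deterministic family $(a_\pi)_{\pi\in\SK}$:
$\VV\bigl[\sum_\pi \alpha_\pi a_\pi\bigr]=\frac{1}{k!+1}\bigl(\frac{1}{k!}\sum_\pi a_\pi^2-\bigl(\frac{1}{k!}\sum_\pi a_\pi\bigr)^2\bigr)$.
You then apply it directly with $a_\pi=\Chat_\pi(u,v)$.

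This buys three things:
\begin{enumerate}[(i)]
\item It is shorter, since it needs none of the ten variance and covariance terms.
\item It yields the covariance identity for arbitrary pairs of points in $\II^2$ at no extra cost.
\item It sidesteps the boundary case $u=1$ or $v=1$. There the corners $\tfrac{i+1}{k}$ or $\tfrac{j+1}{k}$ fall outside $\dk$, and the paper's expansion relies on the corresponding coefficients vanishing.
\end{enumerate}

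The paper's route has the modest advantage of invoking Lemma~\ref{lem:cov_YY} verbatim rather than re-running its proof.

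Both routes rely on the same implicit fact, which you use without comment. Choosing $\Chat_\pi$ uniformly from $\PKhat$ is the same as choosing $\pi$ uniformly from $\SK$, because $\pi\mapsto\Chat_\pi$ is injective. This is what justifies $\frac{1}{k!}\sum_\pi\Chat_\pi(u,v)=\EE[\Xhat_k(u,v)]$.
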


\begin{proof}
By equation~\eqref{eq:EVC_uni} and Theorem~\ref{th:exk_hat} we have
$$\EE\big[\Yhat_k(u,v)\big]= \sum_{\pi \in \SK} \EE[\alpha_{\pi}] \Chat_{\pi}(u,v) = \frac{1}{k!}\sum_{\pi \in \SK} \Chat_{\pi}(u,v) = \EE[\Xhat_k(u,v)]= uv.$$
In order to compute the variance $\VV\big[\Yhat_k(u,v)\big]$, we need the covariances between random variables $Y_k(\tfrac{i}{k},\tfrac{j}{k})$, $Y_k(\tfrac{i+1}{k},\tfrac{j}{k})$, $Y_k(\tfrac{i}{k},\tfrac{j+1}{k})$, and $Y_k(\tfrac{i+1}{k},\tfrac{j+1}{k})$.
By Lemma~\ref{lem:cov_YY} we have
\begin{align*}
    \Cov[Y_k(\tfrac{i}{k},\tfrac{j}{k}), Y_k(\tfrac{i+1}{k},\tfrac{j}{k})] &= \frac{1}{k!+1}\, \Cov[X_k(\tfrac{i}{k},\tfrac{j}{k}), X_k(\tfrac{i+1}{k},\tfrac{j}{k})], \\
    \Cov[Y_k(\tfrac{i}{k},\tfrac{j}{k}), Y_k(\tfrac{i}{k},\tfrac{j+1}{k})] &= \frac{1}{k!+1}\, \Cov[X_k(\tfrac{i}{k},\tfrac{j}{k}), X_k(\tfrac{i}{k},\tfrac{j+1}{k})], \\
    \Cov[Y_k(\tfrac{i}{k},\tfrac{j}{k}), Y_k(\tfrac{i+1}{k},\tfrac{j+1}{k})] &= \frac{1}{k!+1}\, \Cov[X_k(\tfrac{i}{k},\tfrac{j}{k}), X_k(\tfrac{i+1}{k},\tfrac{j+1}{k})], \\
    \Cov[Y_k(\tfrac{i+1}{k},\tfrac{j}{k}), Y_k(\tfrac{i}{k},\tfrac{j+1}{k})] &= \frac{1}{k!+1}\, \Cov[X_k(\tfrac{i+1}{k},\tfrac{j}{k}), X_k(\tfrac{i}{k},\tfrac{j+1}{k})]. 
\end{align*}
Since $\Yhat_k(u,v)$ is obtained from $Y_k(\tfrac{i}{k},\tfrac{j}{k})$, $Y_k(\tfrac{i+1}{k},\tfrac{j}{k})$, $Y_k(\tfrac{i}{k},\tfrac{j+1}{k})$, and $Y_k(\tfrac{i+1}{k},\tfrac{j+1}{k})$ in the same manner as $\Xhat_k(u,v)$ is obtained from $X_k(\tfrac{i}{k},\tfrac{j}{k})$, $X_k(\tfrac{i+1}{k},\tfrac{j}{k})$, $X_k(\tfrac{i}{k},\tfrac{j+1}{k})$, and $X_k(\tfrac{i+1}{k},\tfrac{j+1}{k})$, we finally obtain
\begin{align*}
    \VV\big[\Yhat_k(u,v)\big] &= \frac{1}{k!+1}\, \VV\big[\Xhat_k(u,v)\big] \\
    &= \frac{1}{(k!+1)(k-1)}\Big(u(1-u)-\frac{t(1-t)}{k}\Big)\Big(v(1-v)-\frac{s(1-s)}{k}\Big),
\end{align*}
which finishes the proof.
\end{proof}

As anticipated, the expected values of random variables $\Xhat_k(u,v)$ and $\Yhat_k(u,v)$ are equal to the value of the product copula, i.e., they are given by the same formal expression as expected values of $X_k(u,v)$ and $Y_k(u,v)$. On the other hand, the formal expressions for the variances of  $\Xhat_k(u,v)$ and $\Yhat_k(u,v)$ are more involved than the respective ones for $X_k(u,v)$ and $Y_k(u,v)$. However, their order of magnitude as $k$ grows to infinity remains the same.

\section{An application to concordance measures} \label{se:simulation}

In this section we present a particular example of an application of random discrete copula induced by a permutation to the study of concordance measures.

Concordance measures are used in statistical inference to quantify the degree of association between random variables.
Given a pair of continuous random variables $X$ and $Y$, the value of a concordance measure is uniquely determined by the copula of the pair, \cite{Kr58,Li14}.
Different measures are used in practice, such as Spearman's rho, Kendall's tau, Gini's gamma and Blomqvist's beta, so the relations between them are of interest. In particular, it is important to assess how much these measures can differ in order to decide which one to use.
Several papers have considered the problem of determining the exact region $\{(\kappa_1(C),\kappa_2(C)) \mid C \text{ is a copula}\}$, where $\kappa_1$ and $\kappa_2$ are two concordance measures, see for example \cite{ScPaTr17,KoBuSt23}.

Here, we investigate the pair Spearman's rho, denoted by $\rho$, and Kendall's tau, denoted by $\tau$. The exact region $\Omega_{\tau,\rho}$ for this pair was determined in \cite{ScPaTr17} and is contained in the set
$$\Omega_0=\{(\tau,\rho) \in [-1,1]^2 \mid -1 \le 3\tau-2\rho \le 1,\ \tfrac{1}{2}(1+\tau)^2-1 \le \rho \le 1-\tfrac{1}{2}(1-\tau)^2\},$$
depicted on Figure~\ref{fig:rand_8} (right).
The difference between $\Omega_0$ and the exact region $\Omega_{\tau,\rho}$ is very small, the area of $\Omega_0$ is $\tfrac{7}{6} \approx 1.1667$ while the area of $\Omega_{\tau,\rho}$ is approximately $1.1543$, see \cite{KoBuMo22}.

Note that each point in the region $\Omega_{\tau,\rho}$ may be attained by many distinct copulas.
Loosely speaking, the copulas attaining the points near the boundary of the region seem to be very specific, see \cite{ScPaTr17}, while copulas attaining the points close to the center $(\tau,\rho)=(0,0)$, are very common.
To support this observation, we investigate the probability distribution of the point $(\tau(C),\rho(C))$ where $C$ is an extension to $[0,1]^2$ of a random discrete copula $X_k$ induced by a permutation and $k$ is fixed.
In essence, this corresponds to considering extensions of empirical copulas obtained from samples of size $k$.

The paper \cite{GeNe07} discusses the issue of using copulas to model discrete random vectors. The core problem is that the copula corresponding to a discrete random vector $(X,Y)$ is not uniquely determined, i.e., every discrete copula can be extended to $[0,1]^2$ in many ways.
This in turn means that the pair $(\tau(C),\rho(C))$ depends on the particular extension $C$ of the discrete copula and is not a single point.

Hence, we consider the lower and upper bound for all possible extensions. Let $C_\pi$ be a realization of a random discrete copula $X_k$ induced by a permutation. The upper bound of all possible extensions of $C_\pi$ is obtained by spreading the mass in each square $[\tfrac{i-1}{k},\tfrac{i}{k}] \times [\tfrac{j-1}{k},\tfrac{j}{k}]$ uniformly along the main diagonal of the square (as in $M$), while the lower bound is obtained by spreading the mass in each square uniformly along the opposite diagonal of the square (as in $W$).
We denote these extensions by $C_\pi^M$ and $C_\pi^W$. Note that both of these extensions are shuffles of min, see \cite[\S 3.2.3]{Nel06}. An elementary calculation reveals that
\begin{equation} \label{eq:rho-tau}
\begin{aligned}
    \rho(C_\pi^M) &=\rho(C_\pi^W)+\tfrac{2}{k^2}, & \rho(\widehat{C}_\pi) &=\tfrac{1}{2}(\rho(C_\pi^W)+\rho(C_\pi^M)),\\
    \tau(C_\pi^M) &=\tau(C_\pi^W)+\tfrac{2}{k} , & \tau(\widehat{C}_\pi) &=\tfrac{1}{2}(\tau(C_\pi^W)+\tau(C_\pi^M)),
\end{aligned}
\end{equation}
where $\widehat{C}_\pi$ is the checkerboard extension, see Section~\ref{se:extension}.

\begin{figure}
    \centering
    \includegraphics[width=0.5\linewidth]{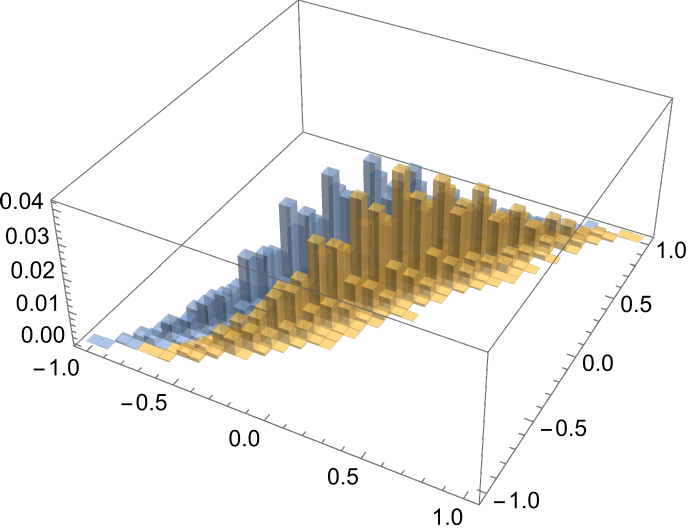}
    \includegraphics[width=0.4\linewidth]{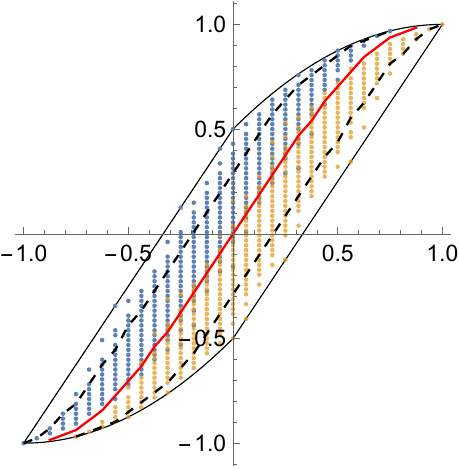}
    \caption{The histograms and the set of points $(\tau(C),\rho(C))$ for extensions $C=C_\pi^W$ and $C=C_\pi^M$ for all $\pi \in \mathcal{S}_8$.}
    \label{fig:rand_8}
\end{figure}

We first consider the mesh $\Delta_k$ for $k=8$. There are $40\,320$ permutations in $\mathcal{S}_8$, so we are able to generate $C_\pi$ for all permutations, and calculate the points $(\tau(C_\pi^W),\rho(C_\pi^W))$ and $(\tau(C_\pi^M),\rho(C_\pi^M))$ for all of them.
The so obtained points are depicted in Figure~\ref{fig:rand_8}, right, against the region $\Omega_0$. The points $(\tau(C_\pi^W),\rho(C_\pi^W))$ are plotted in blue, while the points $(\tau(C_\pi^M),\rho(C_\pi^M))$ are plotted in yellow.
The histograms of their distributions with $32 \times 32$ bins are depicted in Figure~\ref{fig:rand_8}, left, with the same color-coding.
The red curve represents the mean of $\rho(\widehat{C}_\pi)$ given the value of $\tau(\widehat{C}_\pi)$.
Note that by equations~\eqref{eq:rho-tau} the histogram of the distribution of points $(\tau(\widehat{C}_\pi),\rho(\widehat{C}_\pi))$ lies "half-way between" the blue and yellow histograms. The lower dashed curve represent the $5\%$ quantile for the value of $\rho(C_\pi^M)$ given the value of $\tau(C_\pi^M)$, while the upper dashed curve represent the $95\%$ quantile for the value of $\rho(C_\pi^W)$ given the value of $\tau(C_\pi^W)$.
We caution that the interval between the dashed curves cannot be interpreted as the $90\%$ confidence interval of $\rho(C)$ given $\tau(C)$, where $C$ is an extension of $C_\pi$, because the values $\tau(C_\pi^W)$ and $\tau(C_\pi^M)$ differ.

As expected, the simulations support the intuition that the copulas attaining the points close to the boundary of the set $\Omega_0$ are quite rare while those attaining points close to the red curve, i.e., the  mean of $\rho(\widehat{C}_\pi)$ given $\tau(\widehat{C}_\pi)$, are much more common.
The simulation also suggests that, although the range of $\rho$ given the value of $\tau$ may be quite large, we can expect the point $(\tau(C),\rho(C))$ to be close to the red curve when dealing with a random copula $C$.
This means, that a point far away from the red curve indicates a very specific dependence structure. For example, Figure~\ref{fig:boundary_mean} depicts the mean mass distribution of discrete copulas corresponding to the yellow points below the lower dashed line on Figure~\ref{fig:rand_8} for different values of $\tau$.

\begin{figure}
    \centering
    \includegraphics[width=\linewidth]{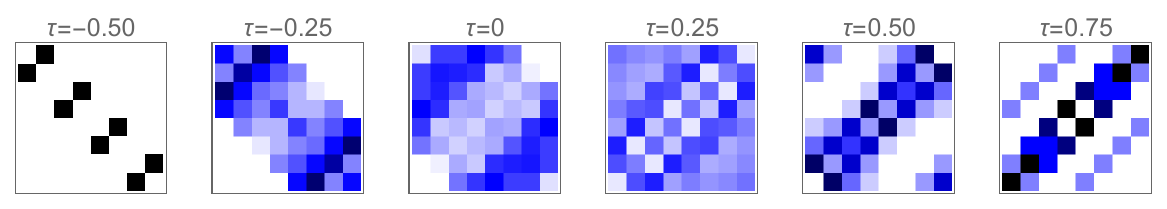}
    \caption{Mean mass distribution of copulas corresponding to yellow points below the lower dashed line in Figure~\ref{fig:rand_8} for different values of $\tau$. Darker color indicates more mass.}
    \label{fig:boundary_mean}
\end{figure}

In the second example, we analyze the mesh $\Delta_k$ for $k=12$. There are too many permutations in $\mathcal{S}_{12}$ to consider all of them, so we generated a random sample of size $380 \, 000$ which is approximately $0.079\%$ of all permutations. We repeated all the calculations of the previous example. The results are depicted in Figure~\ref{fig:rand_12}. In this case, the histograms were chosen to have $72 \times 72$
bins.

\begin{figure}
    \centering
    \includegraphics[width=0.5\linewidth]{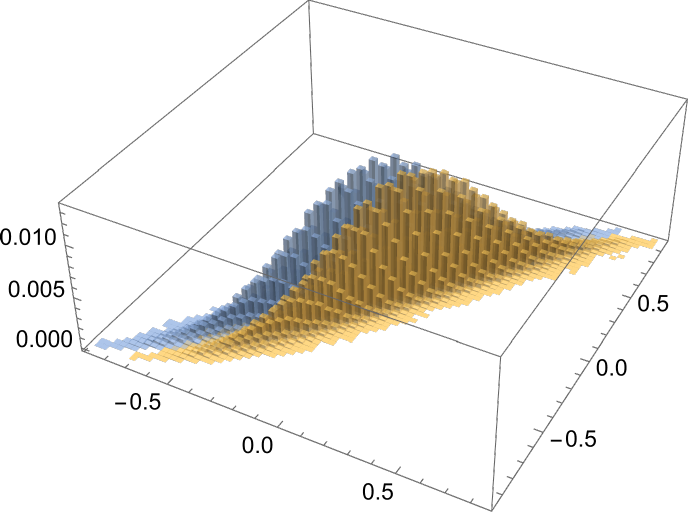}
    \includegraphics[width=0.4\linewidth]{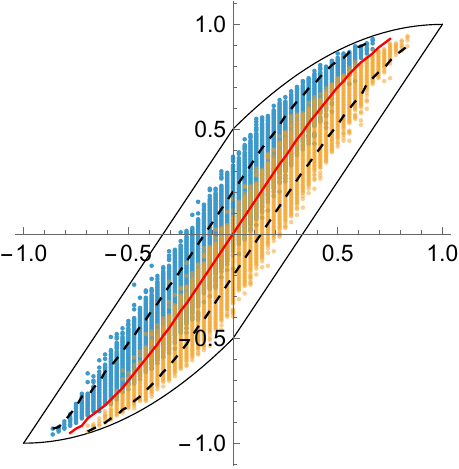}
    \caption{The histograms and the set of points $(\tau(C),\rho(C))$ for extensions $C=C_\pi^W$ and $C=C_\pi^M$ for a sample of $380\,000$ permutations $\pi \in \mathcal{S}_{12}$.}
    \label{fig:rand_12}
\end{figure}

\section{Concluding remarks} \label{se:concluding}

In this paper we
introduced the notion of a random discrete copula and explored its basic properties. 
We proved that the expected value of the evaluation of a random discrete copula at a certain point is equal to the value of the product copula, while its variance rapidly decreased with the size of the mesh.  This seems to support the intuition - if we imagine picking two real life random variables at random with no preferences, intuitively it is reasonable to expect that they would be nearly independent with high probability, since they could be describing two unrelated phenomena. 
The expected value is also consistent with our choice to model the situation where no information is available (broadness).
The fast rate of convergence of the variance to $0$ with the mesh size is a consequence of our decision to define a random discrete copula on $\DK$ as a random convex combination of extreme points of the Birkhoff polytope $\mathscr{B}_k$. The coefficients were chosen uniformly with respect to the Lebesgue measure on a standard simplex. 
This means that the copulas corresponding to the points near the center of mass of the Birkhoff polytope $\mathscr{B}_k$ are much more probable than those close to the boundary.
An important point to take away from our results is that the shape of variance as a function of the point $(u,v)$ does not change with the mesh size in the discrete case. It contains the crucial factor $uv(1-u)(1-v)$ which is only scaled with respect to the mesh size.
We also considered bilinear extension of a random discrete copula and calculated its expected value and variance at every point in the unit square.

Our future work will include investigation of the convergence of a random discrete copula as the size of the mesh increases. It would also be interesting to consider other distributions on the set of discrete copulas and look for one with a slower rate of convergence of the variance.
Furthermore, we plan to develop models that will account for prior knowledge about the unknown dependence structure. For example, one might want the model to reflect a prior knowledge about concordance, tail dependence, or asymmetry of the unknown copula.

\section*{Acknowledgments}

The authors are thankful to the anonymous reviewers and the area editor whose comments helped us to improve the paper significantly.
The authors acknowledge financial support from the ARIS (Slovenian Research and Innovation Agency, research core funding No. P1-0222 and projects J1-70034 and J1-50002).

\bibliographystyle{amsplain}
\bibliography{biblio}

\end{document}